\theoremstyle{plain}
\newtheorem{theorem}{Theorem}
\newtheorem{proposition}[theorem]{Proposition}
\newtheorem{lemma}[theorem]{Lemma}
\newtheorem{assumption}[theorem]{Assumption}
\theoremstyle{definition}
\newtheorem{definition}[theorem]{Definition}
\newtheorem{example}[theorem]{Example}
\newtheorem{definitiont}[theorem]{Definition-Theorem}
\theoremstyle{remark}
\newtheorem{remark}[theorem]{Remark}
\newtheorem{notation}[theorem]{Notation}
\numberwithin{theorem}{section}
\def\C{\mathbb{C}}
\def\Z{\mathbb{Z}}
\def\R{\mathbb{R}}
\def\P{\mathbb{P}}
\def\D{\Delta}
\def\co{\mathcal O}
\def\co{\mathcal{O}}
\def\SO{\operatorname{SO}}
\def\SL{\operatorname{SL}}
\def\lg{\langle}
\def\rg{\rangle}
\def\X{{X_{\Sigma}}}
\def\pu{\mathbb P^1}
\def\nbl{\noindent $\bullet$ }
\begin{document}

\title[W-models, toric hypersurfaces, symplectic cuts]
{Weierstrass models of elliptic toric $K3$ hypersurfaces and symplectic cuts}

\arxurl{arXiv:1201.0930v2}

\author[A. Grassi and V. Perduca]{Antonella Grassi and Vittorio Perduca}

\address{Department of Mathematics, University of Pennsylvania \\ David Rittenhouse Laboratory, 209 S 33rd Street \\ Philadelphia, PA 19104, USA\\MAP5 - Laboratory of Applied Mathematics \\ Paris Descartes University and CNRS \\ 45 Rue des Saints-P\`eres \\ 75006 Paris, France}  
\addressemail{grassi@math.upenn.edu, vittorio.perduca@parisdescartes.fr}


\begin{abstract}We study  elliptically fibered $K3$ surfaces, with sections, in toric Fano threefolds which satisfy certain combinatorial properties relevant to F-theory/Heterotic duality. We show that some of these conditions are equivalent to  the existence of an appropriate notion of a Weierstrass model adapted to the toric context. Moreover, we show that if in addition other conditions are satisfied, there exists a toric semistable degeneration of the elliptic $K3$ surface which  is compatible with the elliptic fibration and F-theory/Heterotic duality.
\end{abstract}

\maketitle





\noindent

\section{Introduction}

 This paper  begins an investigation motivated by the physics of String Theory,
 in particular the ``F-theory and Heterotic duality"  which predicts unexpected relations between certain $n$ dimensional elliptically fibered Calabi-Yau varieties with section (the F-theory models) and certain principal  bundles over $n-1$ dimensional Calabi-Yau varieties
(the Heterotic models), \cite{MV},  \cite{FMW} and \cite{FMWJAG}. Recently this duality has also been used to construct realistic models in string theory, see for example the survey \cite{TaylorTASI}.

 The duality is conjectured  to be between $n$ dimensional Calabi Yau  varieties which are $K3$ fibered  and $n-1$ dimensional Calabi-Yau variety  elliptically fibered over the same base \cite{FMW} and \cite{FMWJAG}. When $n=2$, that is the duality is between F-theory  on an elliptically fibered $K3$ surface with section and Heterotic theory on an elliptic curve,   the elliptic curve and the bundles are obtained by a suitable semistable degeneration of the $K3$ surface; this semistable degeneration is the starting point of the  duality in higher dimension \cite{MV}.

It is neither straightforward nor easy to produce these pairs of dual varieties.
In the 90s, Candelas and collaborators, \cite{CandelasFont}, \cite{CandelasNeuchatel}, \cite{CandelasCongress}, \cite{CandelasPerevalovRajesh}, \cite{Rajesh} and also \cite{BCDG}  proposed a clever and quick algorithm to find the Heterotic Calabi-Yau duals $(Y,E)$ for {\textit{certain}}  Calabi-Yau manifolds $X$ which are hypersurfaces in toric
 Fano varieties $V$.  Berglund and Mayr \cite{BeMa}  proposed an explicit construction which assigns to each given toric variety of dimension $n$  a
toric variety of dimension $n-1$ together with a specific family of sheaves on it. This
 systematic construction produces many interesting examples of conjectured F-theory/Heterotic dual in the toric context.
 The essence of these algorithms is a sequence of {\textit{suitable}} projections from the toric fan of $V$. Candelas and collaborators, Berglund and Mayr check via other physics dualities that the constructions produces the
expected pair of dual varieties and the group, but the mathematical reasoning  behind this, that is the connection to the semistable degeneration, remains elusive.

This paper represents a first step towards the ultimate goal, mentioned in the paper of Hu, Liu and Yau \cite{HYL},  to present the F-theory/Heterotic
analogue of Batyrev's constructions of toric mirrors.
We consider the case of $K3$ surfaces/elliptic curves, which are the building block for higher dimensional F-theory/Heterotic duality.
When the duality is between  F-theory on an elliptically fibered $K3$ surface with section and Heterotic theory on an elliptic curve,
   Clingher and Morgan proved that certain regions of
 the moduli space of such Heterotic theories and their F-theory counterparts can be identified as dual \cite{ClingherMorgan}.
 The authors consider a partial compactification of the moduli spaces of the smooth elliptic $K3$ surfaces with section, by adding two divisors at infinity $D_1$ and $D_2$; the points of the boundary divisors  correspond
to semistable degenerations of $K3$ surfaces given by the union of two rational
elliptic surfaces glued together along an elliptic curve in two different ways. An elliptic curve $E$ (the double curve of the degeneration)  is then shown to be the Heterotic dual of the $K3$; the elliptic curve is endowed with a flat $G$-bundle.  The  Lie group $G$ is
 $ (E_8 \times E_8) $ for one boundary divisor and
 $\mbox{Spin}(32)/ \Z/2\Z$ for the other.
 We give a mathematical definition for the properties of the examples of Candelas and collaborators, which we refer to as  Candelas' conditions in Section \ref{sec:candelasexamples} and proof several theorems, in particular \ref{cf} and \ref{toricbat}, which provide an explanation, in term of semistable degeneration, of Candelas' algorithm, and part of Berglund-Mayr's construction.
The main idea  is to combine techniques from symplectic geometry,  namely the symplectic cut of the moment polytope \cite{Lerman}, \cite{Ionel}, with an appropriate notion of a Weierstrass model, adapted to the toric context.
The dual Newton (moment) polytope is the polytope naturally associated to Fano toric varieties, whose lattice points correspond to the sections of the anticanonical bundle.

In Section \ref{background}, after some general facts about toric Fano varieties and their Calabi-Yau hypersurfaces  we explain how the inclusion of a $2$-dimensional reflexive polytope determines the elliptic fibration. This observation, long known in the physics literature, is the starting point of Rohsiepe's analysis for higher dimensional Calabi-Yau elliptic fibrations \cite{Ro05}: we use it to construct the elliptic fibrations in  the examples. Here ``top and bottom" are used to identify the inverse image(s) of the fixed points of $\P^1$  under the torus action, and the corresponding singular fibers of the elliptic fibration.

We give a summary of Candelas' conditions in Section \ref{sec:candelasexamples}. The surjection condition 2) appears in all the Candelas' examples: it is  a novel idea, somewhat hidden in Candelas' papers and central in Berglund-Mayr.  We recast the conditions in a mathematical context in the following Section \ref{polytopesfibrationssection}. In \ref{semistablepolys} we define the semistable polytopes, which play the role of  building blocks in the  examples in above mentioned papers.
A semistable polytope will satisfy the Candelas' conditions 1) and 3) stated in Section \ref{sec:candelasexamples}. The particular class of polytopes Theorem \ref{th:exun} satisfies
all the Candelas' conditions; in Section \ref{subsec:cf} we actually show that all the polytopes which satisfy the Candelas' conditions can be constructed from semistable polytopes.
 In section \ref{sections} we
show that condition 3) corresponds to the existence of a ``section at $\infty$" of the elliptic fibration of the K3 surface. We show that the existence of the section at $\infty $ and condition 1) implies the existence of a \emph{toric} Weierstrass model, which we call the \emph{Candelas-Font Weierstrass} model (Section \ref{subsec:cf}). We characterize these models in Theorem \ref{th:diagr}.
 Theorem \ref{th:projection} characterizes Candelas' examples.
It was noted by \cite{CandelasFont} and \cite{BeMa} that these Calabi-Yau varieties yield mirror elliptic fibrations.

In Section \ref{sec:tntsections} we proof an easy combinatorial condition for the existence of a section of the elliptic fibration.

The symplectic cut is the focus of Section \ref{symplecticcut}.  We show that Candelas'  conditions  1) and 2)  correspond to the existence of a codimension one slice in the moment polytope, which cuts it into two ``nice'' parts (Theorem \ref{semistablepartition}). We then show that these conditions give a ``symplectic cut'' which determines a toric  semistable degeneration of the original Fano variety into two other semistable toric varieties; this degeneration induces a natural semistable degeneration of the Calabi-Yau hypersurface,   (Theorems \ref{cf} and \ref{toricbat}). Remark \ref{rmk:cutswithinftysection} concludes  that Candelas' conditions 1), 2) and 3), or equivalently Theorem \ref{th:projection}, imply that  the elliptically fibered $K3$ degenerates to two rational elliptic surfaces glued along a fiber and the degeneration preserves the elliptic fibration which induces a semistable degeneration of the section at infinity.

The idea of the symplectic cut can be applied also in higher dimensional fibrations; we leave this for further studies: we believe that \cite{HYL} contains many useful techniques for this purpose.

\noindent{\bf Acknowledgements.} We would like to thank V. Braun, P. Candelas, M. Rossi,  U. Whitcher  and in particular X. De la Ossa for several useful conversations. This paper is based in part on some sections of V. Perduca's thesis, at University of Turin. We would like to thank the  the Mathematics Department of University of Pennsylvania and il Dipartimento di Matematica dell' Universit\`a di Torino, especially A. Conte and M. Marchisio. A.G. would  like to thank J. Morgan and the staff at Simons Center for Geometry and Physics for their gracious hospitality.

This research project was partially supported by National Science Foundation grant DMS-0636606 and by the Simons Center for Geometry and Physics. V.P. benefited from INDAM - GNSAGA travel fellowships and is presently supported by the  \emph{Fondation Sciences Math\'ematiques de Paris} postdoctoral fellowship program, 2011-2013.

\section{Background and notations}\label{background}

\subsection{Toric, Fano varieties}

We follow the notation of \cite{Bat94}, \cite{CoKa99} and \cite{CoxLSh}.

\nbl $N,M \subset \Z^n$ are dual lattices with real extensions $N_{\R},M_{\R}$; we denote by $\lg*,*\rg:M\times N\rightarrow\Z$ the natural pairing; $T_N =N\otimes\C^*$ is the algebraic torus;

\nbl $\D\subset M_{\R}$ is an integral  polytope, that is, each vertex is in $M$; the codimension $1$ faces of $\D$ are called facets;

\nbl We assume $\D$ is \emph{reflexive}, that is, the equation of any  facet $F$ of $\D$ can be written as $\lg m,v\rg=-1$, where $v\in N$ is a fixed integer point and $m\in F$; then the origin is the only  integral interior point  in  $\D$.
The dual of $\D \subset N_{\R}$, defined as the set $\nabla \stackrel{def}= \{v\in N_{\R}| \lg m,v\rg\geq -1 \mbox{ for all } m\in\D\}$, is also an integral reflexive polytope in $N_{\R}$.

\nbl The normal fan of $\D\subset M_{\R}$ in $N$ is the fan over the proper facets of $\nabla \subset N_{\R}$; since $\D$ is reflexive  the rays of its normal fan  are simply the vertices of $\nabla$; let $\P_{\D}$ be  the associated projective toric variety.

\nbl Given a fan $\Sigma$ in $N$, we denote as $X_{\Sigma}$ the corresponding toric variety; when the meaning is clear we simply write $X$.

\nbl $\Sigma^{(1)}$ is the set of all rays of $\Sigma$; each ray $v_i\in\Sigma^{(1)}$ corresponds to an irreducible $T_N$-invariant Weil divisor $D_i\subset X_{\Sigma}$, the \textit{toric} divisors.

\nbl   $\D$ is \emph{reflexive} if and only if the projective toric variety  $\P_{\D}$ is Fano.  Recall that the dualizing sheaf on a compact toric variety $X$ of dimension $n$ is $\hat{\Omega}^n_V=\co_{X}(-\sum_i D_i)$, where the sum ranges over all the toric divisors $D_i$. The canonical divisor is $K_X=-\sum_i D_i$, and therefore $\P_{\D}$ is \emph{Fano}, if and only if $\sum_i D_i$ is ample.

\nbl  A \emph{projective subdivision} $\Sigma$ is a refinement of the normal fan of $\Delta$ which is projective and simplicial,  that is, the generators of each cone of $\Sigma$ span $N_\R$.  The associated toric variety $X_{\Sigma}$ has then orbifold singularities.  $\Sigma$ is \emph{maximal} if its cones are generated by all the lattice points of the facets of $\Sigma$.

 \begin{definitiont}[The Cox ring \cite{Co95}] For each $v_i\in \Sigma^{(1)}$ introduce a variable $x_i$ and consider the polynomial ring
$$
S=\C[x_i:\,v_i\in\Sigma^{(1)}]=\C[x_1,\ldots,x_r],
$$
where $r=|\Sigma^{(1)}|$. $S$ is graded by $A_{n-1}(X_{\Sigma})$  and is called the \emph{homogeneous (Cox) coordinate ring}
 of $X_{\Sigma}$.
 A monomial $\prod_i x^{a_i}\in S$ has degree $[D]\in A_{n-1}(X_{\Sigma})$, where $D=\sum_ia_iD_i$.
 \end{definitiont}

 \begin{definitiont}
For each cone $\sigma\subset \Sigma$ consider the monomial $x^{\hat{\sigma}}=\prod_{v_i\notin \sigma}x_i\in S$, and define the \emph{exceptional set} associated to $\Sigma$ as the algebraic set in $\C^r$ defined by the vanishing of all of these monomials:
$$
Z(\Sigma)=V(x^{\hat{\sigma}}:\, \sigma\in\Sigma)\subset \C^r.
$$
Finally, define $$
G=\{(\mu_1,\ldots,\mu_r)\in (\C^*)^r|\prod_{i=1}^r\mu_i^{\lg e_1,v_i\rg}=\ldots=\prod_{i=1}^r\mu_i^{\lg e_n,v_i\rg}=1\}\subset (\C^*)^r,
$$
where $\{e_1,\ldots,e_n\}$ is the standard basis in $M$. Then: $$X_{\Sigma} \simeq (\C^r-Z(\Sigma))/G.$$ \end{definitiont}

\subsection{Calabi-Yau varieties and reflexive polytopes}

$V$ is a \underline{Calabi-Yau variety} if $K_V \sim \co (V)$, $h^i(\co _V)=0, \ 0<i<\dim V$. If $V$ is an hypersurface in a toric variety, then the condition $h^i(\co _V)=0$ is automatically satisfied.

In fact the lattice polytope $\D_D$ corresponds to the very ample Cartier divisor $D$ which determines the embedding of $X_{\Sigma}$ in some projective space; $\Sigma$ is the normal fan to $\D_D$ . On the other hand, let $\P_{\D}:=X_{\Sigma}$ be the toric variety associated to the normal fan $\Sigma$ of $\D$. $\D$ determines a Cartier divisor $D_{\D}$ and an ample line bundle $\mathcal{L}_{\D}$; its global sections (which provide the equations of the projective embedding) corresponds to the lattice points of $\nu\D$ as explained above.

\begin{theorem}[Ch. 4 \cite{CoKa99}]
If $\D\subset M_{\R}\simeq\R^n$ is a reflexive polytope of dimension $n$, then the general member $\bar{V}\in|-K_{\P_{\D}}|$ is a Calabi-Yau variety of dimension $n-1$. If $\Sigma$ is a projective subdivision of the normal fan of $\D$, then
\begin{itemize}
\item $X_{\Sigma}$ is a Gorenstein orbifold with at worst canonical singularities;
\item $-K_{X_{\Sigma}}$ is semiample and $\D$ is the polytope associated to $-K_{X_{\Sigma}}$;
\item the general member $V\in|-K_{X_{\Sigma}}|$ is a Calabi-Yau orbifold with at worst canonical singularities.
\end{itemize}
\end{theorem}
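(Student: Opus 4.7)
The plan is to unpack the four assertions and reduce each to a standard fact about reflexive polytopes. Throughout, let $\Sigma_\D$ denote the normal fan of $\D$, so that $\P_\D = X_{\Sigma_\D}$, and let $\Sigma$ be a projective subdivision of $\Sigma_\D$, giving a projective birational morphism $\pi\colon X_\Sigma \to \P_\D$. Since $\D$ is reflexive, every facet of its dual $\nabla$ has lattice distance one from the origin, so the primitive generators of the rays of $\Sigma_\D$ (and hence of $\Sigma$, by maximality of lattice points of $\nabla$) all lie on the boundary of $\nabla$ at lattice distance one from $0$.

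First I would verify the Gorenstein/canonical claim. On any simplicial toric variety, $-K_{X_\Sigma}=\sum_i D_i$ is Cartier iff, on each maximal cone $\sigma$, there exists $m_\sigma \in M$ with $\lg m_\sigma, v_i\rg = -1$ for every generator $v_i$ of $\sigma$. Because the $v_i$ all sit on the affine hyperplane $\lg m, \cdot\rg = -1$ defining the corresponding facet of $\nabla$, such an $m_\sigma$ exists; hence $-K_{X_\Sigma}$ is Cartier, i.e. $X_\Sigma$ is Gorenstein. The toric discrepancy of a ray with primitive generator $v$ lying in the interior of a cone $\sigma$ is $\lg m_\sigma, v\rg + 1$, which is $\geq 0$ precisely when $v$ is not strictly inside $\nabla$; since $v$ lies on $\partial\nabla$ by reflexivity, every toric divisor has discrepancy $\geq 0$ over $X_\Sigma$, giving canonical singularities.

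Next, for semi-ampleness and the identification of the polytope: the divisor $-K_{\P_\D} = \sum_i D_i^{\P_\D}$ is ample because $\D$ is reflexive (the polytope attached to $-K_{\P_\D}$ is precisely $\D$, whose origin is the unique interior lattice point, and ampleness is equivalent to strict convexity of the associated support function on $\Sigma_\D$). The pullback $\pi^*(-K_{\P_\D})$ is $-K_{X_\Sigma}$ exactly because no new rays are introduced on the interior of $\nabla$ (they all lie on $\partial\nabla$, hence have discrepancy $0$), so $X_\Sigma \to \P_\D$ is crepant. Therefore $-K_{X_\Sigma}$ is the pullback of an ample divisor, hence semi-ample, and the polytope of global sections of $-K_{X_\Sigma}$ equals the polytope of $-K_{\P_\D}$, namely $\D$.

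Finally, for the Calabi-Yau conclusion, I would combine adjunction with Bertini. The linear system $|-K_{X_\Sigma}|$ is base point free on the smooth locus (semi-ampleness plus the fact that the lattice points of $\D$ give global sections separating points away from the toric boundary), so by Bertini a general member $V \in |-K_{X_\Sigma}|$ has singularities no worse than those of $X_\Sigma$ along the toric strata; standard results (e.g.\ Reid's criterion for the general anticanonical section of a Gorenstein toric variety) then upgrade these to canonical singularities on $V$. Adjunction gives $K_V = (K_{X_\Sigma} + V)|_V = \co_V$, and the vanishing $h^i(\co_V) = 0$ for $0<i<n-1$ follows from the long exact sequence for $0\to \co_{X_\Sigma}(K_{X_\Sigma}) \to \co_{X_\Sigma} \to \co_V \to 0$ together with Demazure vanishing on the Gorenstein toric variety $X_\Sigma$. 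The statement about $\P_\D$ itself is the special case $\Sigma = \Sigma_\D$. The main obstacle in a fully rigorous write-up is the singularity analysis of the general hypersurface at the toric strata of $X_\Sigma$; everything else is a reasonably mechanical translation between the combinatorics of $\D$ and the geometry of $X_\Sigma$.
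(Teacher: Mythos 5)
This is a background theorem that the paper quotes from Chapter~4 of Cox--Katz without giving any proof, so there is no in-paper argument to compare against; your sketch follows the standard argument from that reference (Cartier data $m_\sigma$ supplied by the facets of $\nabla$, crepancy of $X_\Sigma\to\P_\D$, semiampleness by pulling back the ample anticanonical divisor of $\P_\D$, adjunction plus the structure sequence for the Calabi--Yau conditions). Two points in your write-up need repair before it is a correct rendition of that argument.

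First, the discrepancy computation. With your convention $\lg m_\sigma, v_i\rg=-1$ for the generators of $\sigma$, the discrepancy of the toric valuation at a primitive lattice point $v$ of $\sigma$ is $-\lg m_\sigma,v\rg-1$, not $\lg m_\sigma,v\rg+1$; note your stated formula is inconsistent with the conclusion you draw from it in the same sentence. More importantly, to establish canonical singularities you must bound the discrepancies of \emph{all} toric valuations, i.e.\ all primitive lattice points of all cones, not only the ray generators lying on $\partial\nabla$: cones over facets of a reflexive polytope do in general contain primitive lattice points outside $\nabla$, and your formula as written would assign these negative discrepancy. The correct formula disposes of them: any nonzero lattice point $v=\sum\lambda_iv_i$ of $\sigma$ has $\lg m_\sigma,v\rg=-\sum\lambda_i$, a nonpositive integer that vanishes only for $v=0$, hence $\lg m_\sigma,v\rg\le-1$ and the discrepancy is $\ge0$. (This is the standard fact that Gorenstein toric singularities are automatically canonical; reflexivity enters only to produce the $m_\sigma$.) Second, ``Demazure vanishing'' is not the right tool for the term $H^{i+1}(X_\Sigma,\omega_{X_\Sigma})$ in your long exact sequence, since $K_{X_\Sigma}$ is not nef; what you need is the vanishing $H^j(X,\omega_X)=0$ for $j<n$ on a complete toric variety, or equivalently Serre duality against $H^{>0}(\co_X)=0$. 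With these corrections the sketch is sound, and the remaining issue you flag yourself --- the singularity analysis of the general member along the toric strata, via base-point-freeness of the nef divisor $-K_{X_\Sigma}$ and Bertini --- is exactly where the cited source does the real work.
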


In particular, in dimension three or lower the following are equivalent:
\begin{enumerate}
\item $\Sigma=\Sigma_{\max}$ is maximal;
\item $\Sigma$ is given by a triangulation of the facets of $\nabla$ into elementary triangles $v_{i_1}v_{i_2}v_{i_3}$ such that for all $i$, the vectors $v_{i_1},v_{i_2},v_{i_3}$ span the lattice $N$ (equivalently, the convex hull of $\{v_{i1},v_{i2},v_{i3},\mathbf{0}\}$ is a tetrahedron with no lattice points other than its vertices).
 \item $ X_{\Sigma_{\max}}$    is smooth
\end{enumerate}

If $n=3$, $V_{\max}$ is a smooth $K3$ surface, while if $n=2$, $V_{\max}$ is an elliptic curve.

\begin{remark}\label{rem:cyeq} The defining equation of the Calabi-Yau hypersurface $V \in|-K_{X_{\Sigma}}|$, with $X_{\Sigma} $  toric Fano, can be written explicitly. With the above notation, let $z_1,\ldots,z_k$ be the  homogeneous coordinates of $X_{\Sigma}$. $V$ is defined by the vanishing of the generic polynomial whose monomials are the sections of the line bundle $\co_{X_{\Sigma}}(-K_{X_{\Sigma}})$; then $V$ has equation
\begin{equation*}
\label{eq:gency}
\sum_{m\in\D\cap M}a_m\prod_{i=1}^kz_1^{\lg m, v_1\rg+1}\cdot z_2^{\lg m,v_2\rg+1}\cdot\ldots\cdot z_k^{\lg m,v_k\rg+1}=0,
\end{equation*}
where the $a_m$s are generic complex coefficients. The defining equation of $V$ is invariant modulo the action of $\SL(3,\Z)$ on $M$.   These equations are easily implemented in the computer algebra system SAGE \cite{sage} and \cite{Perduca} which has a dedicated package for working with reflexive polytopes.
\end{remark}

 Batyrev \cite{batyrev1984higher}, Koelman \cite{koelman1991number} and then \cite{KrSk97} independently classified all the reflexive polytopes of dimension 2 up to $\SL(2,\Z)$ transformations, see  Figure~\ref{fig:sections2d}.  In dimension 3, which is the one relevant for $K3$ surfaces, the complete classification was carried out by Kreuzer and Skarke \cite{KrSk97,KrSk98} using the software package PALP \cite{KrSk04}. There are 4319 reflexive polytopes in dimension three. Their coordinates are stored in SAGE and can be found on the web page \cite{KrSkweb}.
See also \cite{KnappKreuzer,KnappKMW}.
 Rohsiepe's tables \cite{Ro205} contain a list of all three dimensional reflexive polytopes, in the same ordering as in SAGE, the only difference being that the first reflexive polytope in Rohsiepe's tables is indexed by 1 whereas SAGE indexing starts from 0. Throughout this paper we adopt the same indexing as in SAGE.

\subsection{Intersection on toric $K3$ hypersurfaces}
\label{subsec:intersection}

The following facts about the intersection on toric $K3$ hypersurfaces are known in physics literature \cite{PeSk97}.

Let $\Sigma$ be a projective subdivision of the fan over the proper facets of a  3-dimensional reflexive polytope $\nabla$. For each ray $v_i$ in $\Sigma$, let $D'_i$ be the intersection of $D_i\subset X_{\Sigma}$ with the general $K3$ hypersurface $V\in |K_{X_{\Sigma}}|$. Three cases can occur: 1) $D_i$ doesn't intersect $V$, i.e. $D_i'=0$; 2) $D_i'$ is irreducible on $V$: we call it a \textit{toric} divisor; 3) $D_i'$ is the sum of irreducible divisors on $V$: we call its irreducible components \textit{non toric} divisors.

Let $\Sigma$ be \textbf{maximal}. In this case: 1) $D'_i=0$ if $v_i$ is in the interior of a facet of $\nabla$; 2) $D'_i$ is toric if $v_i$ is a vertex of $\nabla$; 3) $D'_i$  is the sum of $l'(\theta^*) + 1$ non toric divisor if $v_i$ is in the interior of an edge $\theta$ of $\nabla$, where $l'(\theta^*)$ is the \emph{lattice length} of the dual of $\theta$ (i.e. the number of lattice points in the interior of $\theta^*\subset\D$) \cite{CoKa99}. Now let $v_1, v_2$ be two distinct rays in $\Sigma$. The intersection $D_1\cdot D_2\cdot V$ can be non- zero iff $v_1,v_2$ are in the same cone in $\Sigma$, that is there are two elementary triangles $T,T'$ in the triangulation of the facets of $\nabla$ that have the segment $v_1v_2$ in common. Let $v_3$ be the third vertex of $T$ and $v_4$ be the third vertex of $T'$, and denote as $m_{123}\in M$ the dual of the facet of $\nabla$ carrying $T$. Then

\begin{theorem}[\cite{PeSk97}]
\label{th:intK31}
$D_1\cdot D_2\cdot V =\lg m_{123},v_4\rg +1$. In particular, $D_1\cdot D_2\cdot V=0$ if $v_1$ and $v_2$ are not neighbors along an edge of $\nabla$. If $v_1$ and $v_2$ are neighbors along an edge $\theta_{12}$, then
$$
D_1\cdot D_2\cdot V = l_{12} = l'(\theta^*_{12})+1,
$$
where $l'(\theta^*_{12})$ is the lattice length of $\theta^*_{12}$.
\end{theorem}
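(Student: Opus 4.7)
The plan is to use the linear equivalence $V\sim -K_{X_\Sigma}=\sum_i D_i$ together with standard toric intersection theory on the smooth variety $X_{\Sigma_{\max}}$, and to replace $\sum_i D_i$ by a representative supported off the facet $F\ni T$ using the principal divisor attached to the vertex $m_{123}\in \Delta$.

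First I would dispose of the case when $v_1,v_2$ do not span a $2$-cone of $\Sigma$. Then $D_1\cap D_2=\emptyset$ inside $X_\Sigma$, so $D_1\cdot D_2=0$ already in $A_1(X_\Sigma)$ and the triple intersection with $V$ vanishes. Moreover, if $v_1,v_2$ do span a $2$-cone $\sigma$ in $\Sigma$, then $\sigma$ is contained in exactly two $3$-cones, corresponding to the two elementary triangles $T=v_1v_2v_3$ and $T'=v_1v_2v_4$ sharing the edge $v_1v_2$; smoothness (the equivalence stated just before Remark~\ref{rem:cyeq}) gives $D_1\cdot D_2\cdot D_i=1$ when $\{v_1,v_2,v_i\}$ spans a $3$-cone and $0$ otherwise.

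Next, let $F$ be the facet of $\nabla$ containing $T$, and $m_{123}\in \Delta$ its dual vertex, so $\langle m_{123},v_i\rangle=-1$ for $v_i\in F$ and $\langle m_{123},v_i\rangle\ge -1$ otherwise. The principal divisor $\operatorname{div}(\chi^{m_{123}})=\sum_i\langle m_{123},v_i\rangle D_i$ is linearly equivalent to zero, which rewrites as
\begin{equation*}
\sum_{v_i\in F}D_i\;\sim\;\sum_{v_i\notin F}\langle m_{123},v_i\rangle D_i.
\end{equation*}
Adding $\sum_{v_i\notin F}D_i$ to both sides yields
\begin{equation*}
V\sim -K_{X_\Sigma}=\sum_i D_i\;\sim\;\sum_{v_i\notin F}\bigl(\langle m_{123},v_i\rangle+1\bigr)D_i.
\end{equation*}
Intersecting with $D_1\cdot D_2$, the only nonzero contribution from the right-hand side comes from $v_i$ such that $\{v_1,v_2,v_i\}$ spans a cone of $\Sigma$ and $v_i\notin F$; among the two candidates $v_3,v_4$, only $v_4$ lies outside $F$, and the corresponding triple intersection equals $1$. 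This immediately gives $D_1\cdot D_2\cdot V=\langle m_{123},v_4\rangle+1$.

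Finally, to identify this with $l'(\theta_{12}^*)+1$, I would use that the dual edge $\theta_{12}^*\subset\Delta$ has endpoints $m_{123}$ and $m_{124}$ (the vertex dual to the adjacent facet $F'\supset T'$), and that for any $m\in\theta_{12}^*$ one has $\langle m,v_1\rangle=\langle m,v_2\rangle=-1$; thus $m_{123}-m_{124}$ is orthogonal to the plane spanned by $v_1,v_2$. Writing $v_4=av_1+bv_2+cv_3$ (which is possible because smoothness makes $v_1,v_2,v_3$ a $\Z$-basis of $N$) and using that $v_1,v_2,v_4$ is also a $\Z$-basis forces $c=\pm 1$, and reflexivity together with $v_4$ lying on the opposite side of the plane $\langle v_1,v_2\rangle$ from $v_3$ pins down $c=-1$; a short computation then shows $\langle m_{123},v_4\rangle+1$ equals the lattice length of the segment $m_{123}m_{124}$, which is $l'(\theta_{12}^*)+1$. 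The only delicate point I foresee is this last identification of $\langle m_{123},v_4\rangle+1$ with the lattice length of $\theta_{12}^*$, which relies crucially on the smoothness hypothesis (triangles elementary and spanning $N$) and on reflexivity to guarantee the positivity needed to drop absolute values.
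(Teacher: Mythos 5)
Your proof is correct, but note that the paper itself offers no proof of this statement: Theorem~\ref{th:intK31} is quoted from \cite{PeSk97} as a known fact, so there is no internal argument to compare against. Your route --- writing $V\sim-K_{X_\Sigma}=\sum_i D_i$, subtracting the principal divisor $\operatorname{div}(\chi^{m_{123}})$ to get $V\sim\sum_{v_i\notin F}(\langle m_{123},v_i\rangle+1)D_i$, and then using that on the smooth $X_{\Sigma_{\max}}$ one has $D_1\cdot D_2\cdot D_i=1$ exactly when $\{v_1,v_2,v_i\}$ spans a $3$-cone --- is the standard toric computation and works. Two small remarks. First, your intermediate claim that ``only $v_4$ lies outside $F$'' can fail: when the segment $v_1v_2$ is interior to a facet of $\nabla$, both triangles $T,T'$ lie in the same facet $F$, so $v_4\in F$; but then the sum is empty and $D_1\cdot D_2\cdot V=0=\langle m_{123},v_4\rangle+1$, so the displayed formula survives (and this is precisely the case covered by the ``not neighbors along an edge'' clause). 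Second, in the last step what pins down $c=-1$ is simply that the two $3$-cones over $T$ and $T'$ lie on opposite sides of their common $2$-face, not reflexivity; reflexivity enters only to guarantee $\langle m_{123},v_4\rangle\ge-1$, which is the positivity you correctly identify as needed to equate $\langle m_{123},v_4\rangle+1$ with the lattice length of the segment $m_{123}m_{124}=\theta_{12}^*$ (recall the paper's convention that $l'(\theta_{12}^*)$ counts interior lattice points, so $l'(\theta_{12}^*)+1$ is exactly that lattice length).
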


\bigskip

\subsection{Elliptic Fibrations}

The morphism $\pi_V: V \to B$ denotes an \emph{elliptic fibration}, that is
 $\pi_V^{-1} (p)$ is a smooth elliptic curve   $ \forall \ \ p \ \in B$, general. In addition,  $\pi_V: V \to B$ is an \emph{elliptic fibration with section} if there exists a morphism $\sigma_V: B \to V$  which composed with $\pi_V$ is the identity; $ \sigma_V (B)$ is a section of $\pi$.

$\phi: \X \to B$ is a \emph{toric} fibration if $X_{\Sigma}$ (from here on denoted simply as $X$) and $B$ are toric and if $\phi$ is induced by a lattice morphism between the corresponding lattices $\varphi: N_X \to N_B$ which is compatible with the fans. The kernel of $\varphi$ is a sublattice $N_{\varphi} \subset N$.

   We now assume that $X$ is a Fano variety and that $X_{\phi}$ the  general fiber of $\phi$   is  a Fano surface; the restriction of  the fan of $X$ to $N_{\varphi}$ defines the fan of $X_{\phi} $.
 This fibration determines a 2-dimensional   reflexive polytope $\nabla^{\varphi} \subset N_{\varphi,\R}$ corresponding to the toric
 variety $X _\phi$; let $E \in|-K_{X_\phi}|$ be a  general element, a smooth elliptic curve.

\begin{assumption}  We also assume that there  is a section $\sigma: B \to V$ of $\pi: V \to B$
 {induced by a \textbf{toric section}} $\sigma_{X}: B \to X$ of $\phi: X \to B$  such that $\sigma_{X}(B)=D$ is
 a toric divisor. The restriction of $D$ to $V$ can be either reducible or irreducible.
 \end{assumption}

Nakayama showed that an elliptic fibration $V\rightarrow B$ with section has a Weierstrass model in a precisely defined projective bundle \cite{Na88}. In particular, when $V$ is a $K3$ surface and $B=\P^1$, the projective bundle is
$\textbf{P}=\P(\co_ {\P^1} \oplus \co_ {\P^1}(-4) \oplus \co_ {\P^1}(-6))$. Every projective bundle is a toric variety whose corresponding fan can be computed by following the construction described by Oda, Section 1.7 \cite{Od88}. However it turns out that the toric variety $\mathbf{P}$ is not Fano \cite{CoKa99}. For the elliptic $K3$s which are hypersurfaces in Fano toric threefolds, we would like their Weierstrass models to be hypersurfaces in a Fano toric threefolds as well.

\smallskip

We focus on 3-dimensional toric varieties and elliptic K3  hypersurfaces.  As noted in \cite{KrSk98} and \cite{Ro05}, if a reflexive 3-dimensional polytope $\nabla$ contains a reflexive subpolytope $\nabla^\varphi$, then a suitable refinement of the normal fan   of $\Delta$ always gives rise to a natural toric fibration $X \to \pu$ with general fiber a toric Fano surface with reflexive polytope $\nabla^\varphi$.
This can be seen explicitly by describing the toric morphisms in homogeneous coordinates, \cite{Perduca} and the Appendix of the present paper. Other explicit examples of elliptic fibrations of toric $K3$s in homogeneous coordinates can be found in \cite{AvKrMaSk97}. Rohsiepe searched all 4319 reflexive polytopes for toric elliptic fibrations \cite{Ro05}, the results can be found in the tables \cite{Ro205}.

\section{Candelas' Examples}\label{sec:candelasexamples}
We summarize here the main characteristic of the Candelas' examples of elliptic $K3$ F-theory models: recall that $Y \subset V$ is an anticanonical general surface in the toric Fano threefold $X$; all the statements are up to a lattice automorphism $\SL(3,\Z)$.
\begin{enumerate}
\item The lattice $N_\varphi \subset N$ is a summand of $N$, with induced morphism of lattices: $ N \twoheadrightarrow N_\varphi$  $(z_1, z_2, z_3) \mapsto (z_1,  z_{2})$.
\item Under this morphism the lattice  points of the reflexive polytope $\nabla$ are sent onto points of the reflexive polytope $\nabla ^ \varphi$.
\item There exists a vertex $v_z=(a,b,0)$ of $\nabla ^\varphi$ which is not a vertex of $\nabla$.
\end{enumerate}
 Note that conditions 2) and 3)  imply that the edge $L$ of $ \nabla^\varphi$ through $v_z$  is defined by the equation $z_1=a, \ z_2=b$.
 Condition 1) induces a  split of the dual lattice $M$, with coordinates $(z^*_1, z^*_2, z^*_3) $; $\Delta \cup \{z_3 ^*=0\}$, $\Delta ^\varphi$, the dual of the polytope $\nabla^\varphi$.

In  Section \ref{polytopesfibrationssection} we show that condition 3) corresponds to the existence of a ``section at $\infty$" of the elliptic fibration; in Section \ref{symplecticcut} we also assume that  $X$ is simplicial, which is also a case in the Candelas' examples, and show that conditions 1) and 2) imply the existence of the semistable degeneration of $X$ and $Y$. We then discuss the case when all the conditions are satisfied.

\begin{example}[Polytope 3737]\label{3737}  Let $\nabla$ be the polytope with vertices $v_s=(-1,-1,1)$, $v_t=(-1,-1,-1)$, $v_a=(-2,1,1)$, $v_b=(-2,1,-1)$, $v_c=(-1,1,1)$, $v_d=(-1,1,-1)$. The dual $\D$ is the \emph{diamond} in the Example~\ref{ex:diamond15}; $\nabla^{\varphi}$ is the 2-dimensional reflexive polytope number 15 given by points $v_x=(2,-1,0)$, $v_y=(-1,1,0)$, $v_z=(-1,-1,0)$. Clearly all the conditions 1), 2), 3) hold.

\end{example}

\begin{example}[Polytope 4318 fibered by 9]\label{4318}
Let $\nabla$ be the polytope with vertices $v_x=(-1,1,0)$, $v_t=(2,-1,0)$, $v_p=(-1,-1,-6)$, $v_q=(-1,-1,6)$, this is the polytope 4318 in the list \cite{KrSkweb}. We consider the fiber  $\nabla^{\varphi}$ given by points $v_x$, $v_y=(1,-1,-2)$, $v_z=(1,-1,2)$; $\nabla^{\varphi}$ is the 2-dimensional polytope number 9.  Consider the projective subdivision of the normal fan to $\D$ given by the rays $v_x,\,v_y,\,v_z,\,v_s = (0,-1,4),\,v_t,\,v_p,\,v_q$. $\nabla$ satisfies the conditions 1) and 3) but not 2). Then this example cannot have a semistable degeneration as described in Section \ref{symplecticcut} and an Heterotic dual with gauge group $E_8
\times E_8$. In fact  a singular fiber of the elliptic fibration is of Kodaira type $I^*_{12}$, which correspond to a ``gauge group"  of type $\SO(32)$ (or better, $\mbox{Spin}(32/\Z/2\Z)$). We will consider again this polytope in the Example~\ref{89} (with a different fibration).


\end{example}

\begin{example}[Polytope 113: ``Diamond" fibered by 15] \label{ex:diamond15}
Let $\nabla$ be the reflexive polytope with vertices $v_x=(2,-1,0)$, $v_y=(-1,1,0)$, $v_z=(-1,-1,0)$, $v_s=(0,0,1)$, $v_t=(0,0,-1)$; $\nabla^{\varphi}$ has vertices $v_x,v_y,v_z$ and is the 2-dimensional polytope number 15. Conditions 1) and 2) are fulfilled while condition 3) does not hold.
\end{example}

\begin{example}[Polyotope 4: ``Diamond'' fibered by 1]\label{example:diamond1}
 Let $\nabla$ be the reflexive polytope with vertices $v_x=(1,0,0), v_y=(0,1,0), v_z=(-1,-1,0), v_s=(0,0,1), v_t=(0,0,-1)$; $\nabla^{\varphi}$ has vertices $v_x,v_y,v_z$ and is the 2-dimensional polytope number 1, the fan of $\P^2$. Conditions 1) and 2) are fulfilled and condition 3) does not hold.  The general $K3$ (in the anticanonical system) does not have a section,  this
is in fact the hypersurface in $\P^2\times \P^1$ of degree $(3,2)$.
\end{example}

\section{Semistable polytopes,  Sections at infinity, Weierstass models (\emph{Candelas' conditions recasted}).}\label{polytopesfibrationssection}

We start by defining the semistable polytopes, which play the role of  building blocks in the Candelas' examples:
a semistable polytope will satisfy the Candelas' conditions 1) and 3) stated in Section \ref{sec:candelasexamples}. The particular class of polytopes in Theorem \ref{th:exun} satisfies
all the Candelas' conditions; in Section \ref{subsec:cf} we actually show that all the polytopes which satisfy the Candelas' conditions can be constructed from semistable polytopes.

\subsection{Semistable polytopes}\label{semistablepolys}(\emph{On conditions 1) and 3)})

Two-dimensional reflexive polytopes $\nabla^{\varphi}$ are classified up to $\SL(2, \Z)$ (see for example \cite{CoxLSh}), and are listed in Figure~\ref{fig:sections2d}. We denote by
 $\nabla^{i; \varphi}$ the i-th 2-dimensional reflexive polytope in the figure, and by $\D^{d(i);\varphi}$ its dual.
Then
$d(1)=6,d(6)=1,d(2)=7,d(7)=2,d(3)=8,d(8)=3,d(4)=9,d(9)=4,d(5)=10,d(10)=5,d(11)=16,d(16)=11,d(12)=12,d(13)=13,d(14)=14,d(15)=15$.
\begin{definition} Fix a vertex $v_z$ of a 2-dimensional reflexive polytope $\nabla^{ \varphi} \subset N_{\varphi,\R} \subset N_{\R}$ such  that $N_{\varphi} \subset N$. Denote by $v_z$  both the point in $N$ and the unit vector on  a ray $\Sigma_{\varphi}$; let $L$ be a segment of lattice length $2$ centered at $v_z$ such  that the vertices $v_s$ and $v_t$ of $L$ together with the vertices of $\nabla ^\varphi$ generate   $N_\R$.
Assume that the lattice polytope $\nabla^s \subset N_\R$, spanned by the vertices of $\nabla^{\varphi}$ and  $L$ is reflexive: $\nabla^s$ is  a \emph{\textbf{semistable polytope}} with fiber $\nabla^{ \varphi}$.
\end{definition}

\begin{notation}\label{notationss} We denote a semistable polytope with $\nabla^{i, v_z, L; s}$ and its dual with $\D^{i, v_z, L;s}$; here $s$ stands for \emph{semistable}. The fan over $\nabla ^{i, v_z, L;s}$ has simplicial cones over the vertices of $\nabla^\varphi$, $v_s$ and $v_t$ and it determines a simplicial Fano toric variety $X^{i, v_z, L;s}$ together with a morphism $X ^{i, v_z, L;s}\to \pu$. For ease of notation we drop one or more apices among $i,v_z,L$ when the meaning is clear from the context.
\end{notation}
The general hypersurface $V \subset X^s$ in $|-K_{X^s}|$ is  an elliptically fibered $K3$ surface.

\begin{theorem}\label{th:exun}(Existence and uniqueness)
Assume that the vertices $\{ v_s, v_t \}$ together with the vertices of $\nabla ^{\varphi}$ generate  the lattice $N$. Then the convex hull of these vertices is a reflexive polytope and is a semistable polytope $\nabla^s$.  Moreover the fiber of the elliptic fibration  of a very general \footnote{A property is said to be  {\em very general} if it holds in the complement of a countable union of subschemes of positive codimension \cite{Lazarsfeld}.} $K3$ surface $V_{\max} \to \pu$ over the fixed points of $\pu$ are smooth, while the  other singular fibers are semistable.
This polytope is unique up to $\SL(3,\Z)$.
\end{theorem}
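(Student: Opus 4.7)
The plan is to reduce to a convenient normal form via an $\SL(3,\Z)$-transformation, then verify reflexivity by an elementary computation, and finally analyze the induced elliptic fibration using the toric divisor structure.

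\textbf{Normalization and uniqueness.} The hypothesis that $\{v_s,v_t\}$ together with the vertices of $\nabla^\varphi$ span the lattice $N$, combined with $v_s+v_t = 2v_z\in N_\varphi$, forces the image of $v_s-v_z$ in $N/N_\varphi\cong\Z$ to be a generator. A shear in $\SL(3,\Z)$ fixing $N_\varphi$ pointwise therefore puts us in the normal form
\[
N_\varphi=\{z_3=0\},\qquad v_s=v_z+e_3,\qquad v_t=v_z-e_3.
\]
This normal form depends only on the pair $(\nabla^\varphi,v_z)$, and $\nabla^\varphi$ itself is classified up to $\SL(2,\Z)$; hence any two semistable polytopes with the same fiber and marked vertex are $\SL(3,\Z)$-equivalent, which gives uniqueness.

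\textbf{Reflexivity.} In this normal form the boundary of the convex hull of $\nabla^\varphi\cup\{v_s,v_t\}$ decomposes into triangular facets $\{v_i,v_{i+1},v_s\}$ (upper) and $\{v_i,v_{i+1},v_t\}$ (lower), indexed by edges $e=v_iv_{i+1}$ of $\nabla^\varphi$. If the $2$-dimensional facet equation of $e$ in $M_\varphi$ is $\langle m_e',v\rangle=-1$, set
\[
m_e^{\pm}=\bigl(m_e',\ \mp(1+\langle m_e',v_z\rangle)\bigr)\in M.
\]
A direct substitution gives $\langle m_e^+,v\rangle=-1$ on the upper triangle and $\langle m_e^-,v\rangle=-1$ on the lower, using $z_3=0$ for $v_i,v_{i+1}$ and $v_s=v_z+e_3$, $v_t=v_z-e_3$. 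Since all facets thereby admit reflexive integral equations, $\nabla^s$ is reflexive.

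\textbf{Fibers over the torus-fixed points.} The only lattice points of $\nabla^s$ with $z_3\neq 0$ are $v_s$ and $v_t$, so the character $\chi^{e_3^*}\in M$ pulled back from the base coordinate on $\pu$ has divisor
\[
\mathrm{div}(\chi^{e_3^*})=\sum_i\langle e_3^*,v_i\rangle D_i=D_{v_s}-D_{v_t}.
\]
Thus the scheme-theoretic fibers of $\pi_X\colon X^s_{\max}\to\pu$ over $0$ and $\infty$ are the reduced toric divisors $D_{v_s}$ and $D_{v_t}$. The star fan of $v_s$ in $N/\Z v_s\cong N_\varphi$ recovers the refined fan of $\nabla^\varphi$, so $D_{v_s}$ is isomorphic to the general (maximally refined) toric fiber $X_\varphi$; the case of $v_t$ is symmetric. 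Using Remark \ref{rem:cyeq} to write $V$ in Cox coordinates, the restriction $V\cap D_{v_s}$ is cut out by the monomials supported on the upper facets and represents a generic section of $|{-K_{D_{v_s}}}|$, hence a smooth elliptic curve by Bertini for very general $V$.

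\textbf{Other singular fibers, and the main obstacle.} Because $\chi(V)=24$ and both fibers over $0,\infty$ are smooth (contributing Euler characteristic zero), the remaining $24$ is distributed among the singular fibers in $\pu\setminus\{0,\infty\}$. The discriminant of the associated Weierstrass model is a section of a degree-$24$ line bundle on $\pu$; for very general $V$ it has $24$ simple zeros away from $\{0,\infty\}$, yielding $24$ Kodaira $I_1$ fibers, which are semistable. The main technical obstacle is the genericity claim for $V|_{D_{v_s}}$: one must verify via Remark \ref{rem:cyeq} that the Cox-ring monomials of $-K_{X^s}$ supported on the upper facets surject onto those of $-K_{D_{v_s}}$, a direct but careful combinatorial check.
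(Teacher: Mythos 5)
Your normalization, uniqueness, and reflexivity arguments are correct and essentially coincide with the paper's: the paper likewise uses the generation hypothesis to reduce to $v_s=v_z+e_3$, $v_t=v_z-e_3$, and writes down exactly the facet normals you call $m_e^{\pm}$. (One small imprecision: for the two edges of $\nabla^\varphi$ through $v_z$ there is no upper/lower pair of triangles but a single vertical facet; since $1+\langle m_e',v_z\rangle=0$ there, your formula degenerates to the correct normal $(m_e',0)$, so nothing breaks.) The identification of the scheme-theoretic fibers over $0,\infty$ with the reduced irreducible divisors $D_{v_s}\cap V$, $D_{v_t}\cap V$ is also fine.

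The gap is in the last step. The claim that for very general $V$ the discriminant has $24$ simple zeros, so that all singular fibers away from $0,\infty$ are $I_1$, is false in general. Whenever an edge $e$ of $\nabla^\varphi$ not passing through $v_z$ has interior lattice points (this happens for several fibers in Figure \ref{fig:sections2d}, e.g.\ $\nabla^{6;\varphi}$), those points lie in the interior of an edge of $\nabla^s$, and by the discussion in Section \ref{subsec:intersection} the corresponding toric divisors restrict to $(-2)$-curves on $V_{\max}$ having zero intersection with the fiber class; they are components of reducible fibers for \emph{every} member of the anticanonical system, so no genericity assumption can make the discriminant reduced at those points. The whole content of ``the other singular fibers are semistable'' is that these forced reducible fibers are cycles of rational curves ($I_n$) rather than additive Kodaira types, and your Euler-characteristic bookkeeping cannot distinguish, say, a type $II$ or $I_0^*$ fiber from several $I_1$'s. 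The paper closes this by invoking the equality of Picard numbers of the very general $V_{\max}$ and its simplicial toric ambient \cite{BG}, which forces every divisor class on $V_{\max}$ to be toric (so the only non-toric degenerations are nodes), and then reads off the reducible fiber types $I_{q+1}$ from Theorem \ref{th:intK31}. Separately, you correctly flag that your Bertini argument for smoothness of $V\cap D_{v_s}$ still needs the surjectivity onto the anticanonical system of $D_{v_s}$; that is a checkable combinatorial point, but the reducible-fiber issue above is the substantive omission.
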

\begin{proof}   Let $v_x, v_y, v_z,v_{w_j}$ be the vertices of $\nabla^{\varphi}$ (depending on $\nabla^{\varphi}$ either there is no $v_{w_j}$ or $j\in\{1,2,3\}$); without loss of generality we take  $v_z=(z_1,z_2,0)$ and $v_s=(z_1,z_2,1)$ and $ v_t=(z_1,z_2,-1)$.
Let us consider the fan $\Sigma_{\max}$ of the maximal resolution of $\nabla^s$; by construction the rays of this fan are either $v_s, v_t$ or are also rays of  $\nabla^\varphi$. Note that the semistable polytope is simplicial and then the very general $K3$ surface  $V_{\max} \subset X_{\max}$ has the same Picard number of the  ambient Fano threefold $X_{\max} $  \cite{BG}. Then the singular fibers of the elliptic fibration are either nodes or restrictions of toric divisors corresponding to points of $\nabla^{\varphi}$ which are reducible when restricted to the $K3$. From  Section \ref{subsec:intersection} we see  that for each edge $e$ in $\nabla^{\varphi}$ which is also an edge of $\nabla$  there are $r$ semistable fibers $I_{q+1}$ in $V_S$, where $q$ and $r$ are the lattice lengths of $e$ and its dual in $\nabla$ respectively. This implies the first statement.

  For each edge $e$ of $\nabla^{\varphi}$ let $\langle m_e,v \rangle+1=0$ be its equation; $m\in\Z^2$ because $\nabla^{\varphi}$ is reflexive. If $e$ is one of the two edges originating from $v_z$, then the equation of the vertical facet defined by $e$ and $L$ is $\langle m,v\rangle+1=0$, where $m=(m_e,0)\in\Z^3$. If $e$ doesn't pass through $v_z$, the equation of the facet through $e$ and $v_s$ is $\langle m,v \rangle+1=0$ where $m=(m_e,-(az_1+bz_2+1))\in\Z^3$ (and similarly for the facets through $e$ and $v_t$). It follows that $\nabla$ is reflexive.
\end{proof}

The condition that $\{L,\nabla^{\varphi}\}$ generates $N$ does not hold for all the semistable polytopes, see Example \ref{1943}. However, this condition is fulfilled when $L$ is centered in a vertex of $\nabla^{\varphi}$ which satisfies a nice combinatorial property:

\begin{proposition}
\label{prop:rif_sum_radd}
Let $\nabla^s\subset N_\R$ be a semistable polytope and $L$ its edge of lattice length 2 centered in a vertex $v_z$ of $\nabla^\varphi\subset N_{\varphi,\R}$. Let $v_1,v_2$ be the two lattice neighbors of $v_z$ along the two edges of $\nabla^\varphi$ through $v_z$. If $v_z=v_1+v_2$ then $\{L,\nabla^\varphi\}$ generates $N$.
\end{proposition}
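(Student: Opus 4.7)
The plan is to reduce the claim to two lattice-basis facts: \emph{(i)} $\{v_1,v_2\}$ is a $\Z$-basis of $N_{\varphi}$, and \emph{(ii)} the primitive lattice vector $u:=v_s-v_z\in N$ projects to a generator of $N/N_{\varphi}$. Since $v_1,v_2$ are lattice points of $\nabla^{\varphi}$ and $u$ is the difference of two lattice points of $L$, the lattice $L_0$ generated by the lattice points of $L$ and $\nabla^{\varphi}$ contains $v_1,v_2,u$; combined with $N_{\varphi}$ being saturated in $N$ (automatic in the fibration context, since $N/N_{\varphi}$ is torsion-free being a sublattice of the base lattice), (i) and (ii) yield $L_0\supseteq N_{\varphi}+\Z u = N$.

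For step (i), reflexivity of $\nabla^{\varphi}$ cuts out the two edges through $v_z$ by equations $\langle\bar m_i,\cdot\rangle=-1$ with unique $\bar m_i\in M_{\varphi}$, and both $v_z$ and $v_i$ lie on the $i$-th edge. The hypothesis $v_z=v_1+v_2$ then yields $\langle\bar m_i,v_j\rangle=-\delta_{ij}$, so the $\Z$-matrices $V=[v_1\mid v_2]$ (columns) and $B$ with rows $\bar m_1,\bar m_2$ satisfy $BV=-I_2$, forcing $\det V=\pm 1$.

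For step (ii), extend $\{v_1,v_2\}$ to a $\Z$-basis $\{v_1,v_2,e_3\}$ of $N$ (using saturation) and write $u = a v_1 + b v_2 + k e_3$. By reflexivity of $\nabla^s$, the vertical facet of $\nabla^s$ containing $L$ and the edge through $v_z,v_1$ is defined by an equation $\langle m_1,\cdot\rangle=-1$ with $m_1\in M$. Its values on the two endpoints of that edge force $m_1|_{N_{\varphi}}=\bar m_1=(-1,0)$, and its values on $v_s,v_t$ force $\langle m_1,u\rangle=0$. Thus $m_1=(-1,0,c_1)$ in the dual basis, and $\langle m_1,u\rangle=0$ reads $-a+c_1 k=0$, giving $k\mid a$. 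The symmetric argument at the vertical facet above the other edge through $v_z$ gives $k\mid b$. Since $L$ has lattice length $2$, $u$ is primitive in $N$, so $\gcd(a,b,k)=1$, which combined with $k\mid a$ and $k\mid b$ forces $k=\pm 1$.

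The main obstacle lies in step (ii): one must deploy the full reflexivity of the three-dimensional $\nabla^s$ (not merely of $\nabla^{\varphi}$) at both vertical facets through $L$, and it is precisely the hypothesis $v_z=v_1+v_2$ that—via (i)—identifies $\bar m_1,\bar m_2$ with the dual $\Z$-basis $(-1,0),(0,-1)$ of $M_{\varphi}$, which is what converts the two facet-integrality conditions into the divisibility relations $k\mid a$, $k\mid b$ needed for the primitivity argument on $u$ to pin down $k=\pm 1$.
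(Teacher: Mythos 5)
Your proof is correct and follows essentially the same route as the paper's: reflexivity of $\nabla^s$ at the two vertical facets through $L$ yields integrality constraints on the facet normals, and primitivity of $u=v_s-v_z$ (from the lattice length $2$ of $L$) then pins down the transverse coefficient to $\pm 1$ --- your divisibility relations $k\mid a$, $k\mid b$ are exactly the paper's congruences $\alpha\equiv a_1+b_1$, $\beta\equiv a_2+b_2 \pmod{\gamma}$ rewritten in the basis $(v_1,v_2,e_3)$, and your primitivity step is the paper's interior-lattice-point contradiction in disguise. The one genuine improvement is step (i): you deduce $\det(v_1\mid v_2)=\pm 1$ from $BV=-I_2$ using reflexivity of $\nabla^{\varphi}$ together with $v_z=v_1+v_2$, whereas the paper only asserts this by inspecting the sixteen reflexive polygons case by case; you also correctly dispense with the paper's detour through a third facet over an edge not containing $v_z$, which its own argument does not actually need.
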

 The condition $v_z=v_1+v_2$ is not always satisfied: the vertices fulfilling this condition are marked with a square in Figure~\ref{fig:sections2d}; see also Section \ref{sec:tntsections}
Compare the above Proposition \ref{prop:rif_sum_radd} with Theorem \ref{prop:criterio_sections}.
\begin{proof}
Suppose $v_1=(a_1,a_2,0),\, v_2=(b_1,b_2,0)$ and let $v_s=(\alpha,\beta,\gamma)$ be the vertex of $L$ with $\gamma>0$. We show $\gamma=1$ by proving that if $\gamma>1$ then there is a lattice point in the interior of the segment $v_zv_s$.

It can be easily checked that for the vertices $v_z$ s.t. $v_z=v_1+v_2$ (denoted by a square in Figure~\ref{fig:sections2d}) we always have $d:= a_1b_2-a_2b_1=\pm 1$. 
The facet of $\nabla^s$ through $v_z,v_s,v_1$ has equation $\langle m_{zs1},v \rangle+1=0$, where
\begin{equation*}
\label{eq:sz1}
m_{zs1} = \left(-b_2d^{-1},\, b_1d^{-1},\, \frac{-b_1\beta d^{-1}+b_2\alpha d^{-1} - 1}{\gamma}\right)\in M,
\end{equation*}
Similarly, the facet through $v_z,v_s,v_2$ has equation $\langle m_{zs2},v \rangle+1=0$, where
\begin{equation*}
\label{eq:sz2}
m_{zs2} = \left(a_2d^{-1}, -a_1d^{-1}, \frac{a_1\beta d^{-1}-a_2\alpha d^{-1}-1}{\gamma}\right)\in M.
\end{equation*}
It follows that
$$
\left\{
\begin{array}{cl}
-b_1\beta +b_2\alpha  - d&\equiv0\mbox{ mod}\gamma\\
 a_1\beta -a_2\alpha -d&\equiv0\mbox{ mod}\gamma
\end{array}
\right.
$$
because $\nabla^s$ is reflexive. By solving the system and because $d=\pm 1$, we obtain:
\begin{equation}
\label{eq:abmod}
\left\{
\begin{array}{cl}
\alpha & \equiv a_1 + b_1 + 0 \mbox{ mod}\gamma \\
\beta & \equiv a_2+b_2 + 0 \mbox{ mod}\gamma
\end{array}
\right.
\end{equation}
Let $\langle m_e,v \rangle+1=0$, with $m_e=(A,B)\in M_\varphi$, be the equation in the plane $N_\varphi$ of an edge $e$ of $\nabla^\varphi$ not passing trough $v_z$. The facet of $\nabla^s$ through $v_s$ and $e$ has equation $\langle m_{se},v\rangle+1=0$, where
\begin{equation*}
\label{eq:se}
m_{se} = \left(A, B, -\frac{A\alpha+B\beta+1}{\gamma}\right) \in M.
\end{equation*}
Because $\nabla^s$ is reflexive, we have $A\alpha+B\beta+1 \equiv 0\mbox{ mod}\gamma$. From Eqs.~(\ref{eq:abmod}), it follows that $A(a_1+b_1) + B(a_2+b_2) +1 \equiv 0\mbox{ mod}\gamma$, where $A(a_1+b_1) + B(a_2+b_2) +1\geq 2$ because $e$ does not pass through $v_z$. In particular $\gamma$ is a divisor of an integer $>1$; suppose $\gamma\geq 2$. Given an integer $p$ such that $0<p<\gamma$, we have $\lambda_p:=p\gamma^{-1}\in (0,1)$ and $\gamma\lambda_p\in\mathbb{Z}$. We obtain a contradiction by observing that the point $\lambda_pv_s+(1-\lambda_p)v_z$ in the segment $v_sv_z$ is a lattice point because of Eqs.~(\ref{eq:abmod}).
\end{proof}

\begin{example}\label{89}\cite{Sk99} (A semistable polytope)
Let  $\nabla$ be the reflexive polytope with vertices $v_x=(2,-1,0)$, $v_y=(-1,1,0)$, $v_s=(-1,-1,1)$, $v_t=(-1,-1,-1)$ (polytope 88 in the list by Kreuzer and Skarke  \cite{KrSkweb}). In this case $\nabla^{\varphi}=\nabla^{15;\varphi}$ with vertices $v_x,v_y$ and $v_z=(-1,-1,0)$;  the corresponding toric variety is the weighted projective space $\P^{(2,3,1)}$ with homogeneous coordinates $(x,y,z)$. We take $L$ to be the edge $v_s, v_t$ of lattice length 2. Clearly $\{\nabla^{\varphi},L\}$ generates $N$, and therefore $\nabla$ is a semistable polytope; we will see in Proposition \ref{prop:rif_sum_radd} that this is the only semistable polytope $\nabla^{15,v_z;s}$ with marked section corresponding to $v_z=(-1,-1,0)$. The monomials of the equation of the general $K3$ surface $V$ (in the anticanonical system) are given by
$$
x^3,\,y^2,\,a_{12}z^6, a_8xz^4,\, a_4x^2z^2,\, a_6yz^3,\, a_2xyz,
$$
where $a_i$ is a {general} polynomial in $s, t$ of degree $i$, for $i=2,4,6,8,12$. Applying toric automorphisms we obtain the following equation of $V$
$$
y^2 = x^3+a(s,t)xz^4 + b(s,t)z^6,
$$
where $a,b$ are generic polynomials of degree 8 and 12 respectively. The discriminant $\delta=4a^3+27b^2$ has degree 24 and thus it vanishes in 24 points in $\P^1$. In each of those, the orders of vanishing are $(o(a),o(b),o(\delta)) = (0,0,1)$ and thus there are 24 semistable fibers $I_1$. It is easy to verify that $z=0$ is an irreducible section of the fibration.
\end{example}

\begin{example}\label{1943}(A semistable polytope such that $\{L,\nabla^\varphi\}$  does not generate $N$.)
Let  $\nabla$ be the reflexive polytope with vertices $v_x=(-1,1,0)$, $v_y=(-1,-1,0)$, $v_s=(1,-1,2)$, $v_t=(3,-1,-2)$ (polytope 1943) and $v_z=(2,-1,0) $. We have $\nabla^{\varphi}=\nabla^{15;\varphi}$ with vertices $v_x,v_y$ and $v_z=(2,-1,0)$ (note that, with respect to the polytope in the previous example, we changed the names of the coordinates associated to the vertices of $\nabla^{15;\varphi}$). The edge $L=v_sv_t$ has lattice length 2. The condition is not satisfied: for each point $v\in\nabla^{15;\varphi}$ the matrix $(v,v_z,v_s)$ is not in $SL(3,\Z)$. The monomials in the equation of $V$ are
$$
y^6,\, x^2,\, s^6z^3,\, s^4t^2z^3,\, s^2t^4z^3,\, t^6z^3,\, s^4y^2z^2,\, s^2t^2y^2z^2,\, t^4y^2z^2,\, s^2y^4z,\, t^2y^4z,\, xyz.
$$
Note that the semistable polytope $\nabla^{15,v_z,L';s}$ with $L'$ the segment of vertices $v_t=(2,-1,-1)  $ and $ v_s=(2,-1,1) $ is also reflexive.
\end{example}


\subsection{Infinity sections, toric flexes}\label{sections}(\emph{On condition 3)})

In both  Examples \ref{89} and \ref{1943} $D_z$, namely $z=0$, defines the equation of a  section of the elliptic fibration
 $ V \to \pu$, which is
the restriction of the toric section determined by the divisor $D_z$ in $X$; this section is the same for all the
$K3$ hypersurfaces in the same anticanonical system.
From now on
 we assume that the elliptic fibration has a toric section represented by the divisor $D_z$.

In analogy with the classical Weierstrass model, and following the above notation, let   $\{x,y,z, w_j\}$ be the Cox coordinates corresponding to the vertices of $\nabla^\varphi$ (depending on $\nabla^{\varphi}$ either there is no $v_{w_j}$ or $j\in\{1,2,3\}$), with  $z=0$ be the   defining equation of the toric section $D_z$; let  $s,t, r_k$ be the Cox coordinates corresponding to the remaining vertices of
 $\nabla$, with $v_s$ and $v_t$ be the unit lattice points on the edges through $v_z$, where $\phi(v_s) $ and
 $ \phi(v_t)$ span two different cones of the fan of $\pu$. This will assure the fibration is  easily described in
 terms of the homogeneous
coordinates, see Remark \ref{rem:cyeq}. In many cases this gives a projective resolution of the normal fan.
 Let $g(x,y,z, w_j)=0$ be
the equation of
$E$ in $X_{\phi}$ and  $f(x,y,z, w_j, s,t, r_k)=0$ be
the equation of
$V$ in $X$. We often write $f(x,y,z, w_j, s,t, r_k)=G(x,y,z,w_j)$ with $G \in \C[s,t,r_k]$, in the form of the equation of the general elliptic curve in  $ X_\phi$.

\begin{proposition}\label{inftysection}(\emph{Condition 3).})   $f_{|z=0}$ does not depend on the coordinates $(s,t)$ if and only if $v_z$ is an interior
 point
of an edge of $\nabla$ with vertices $v_{s}$ and $v_{t}$.
\end{proposition}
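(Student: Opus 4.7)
I begin from the explicit formula for $f$ recalled in Remark~\ref{rem:cyeq}:
$$f\;=\;\sum_{m\in\D\cap M}a_m\prod_{v_i\in\Sigma^{(1)}}x_i^{\langle m,v_i\rangle+1}.$$
Setting $z=0$ kills every term whose $z$-exponent $\langle m,v_z\rangle+1$ is positive, so the surviving monomials are exactly those indexed by the face
$$F_z\;:=\;\{m\in\D\,:\,\langle m,v_z\rangle=-1\}$$
of $\D$. Under reflexive duality this face corresponds to the smallest face $G_z$ of $\nabla$ containing $v_z$. Each surviving monomial has $s$- and $t$-exponents $\langle m,v_s\rangle+1$ and $\langle m,v_t\rangle+1$, so genericity of the $a_m$ yields the translation: $f_{|z=0}$ is independent of $(s,t)$ if and only if $\langle m,v_s\rangle=\langle m,v_t\rangle=-1$ for every $m\in F_z\cap M$.

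The direction $\Leftarrow$ is then immediate. If $v_z$ lies in the relative interior of an edge of $\nabla$ with vertices $v_s,v_t$, write $v_z=\lambda v_s+(1-\lambda)v_t$ for some $\lambda\in(0,1)$. Reflexivity of $\D$ gives $\langle m,v_s\rangle,\langle m,v_t\rangle\geq -1$, and the convex combination $-1=\lambda\langle m,v_s\rangle+(1-\lambda)\langle m,v_t\rangle$ forces both pairings to equal $-1$.

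For $\Rightarrow$, the hypothesis says precisely that $v_s$ and $v_t$ pair to $-1$ against every lattice point of $F_z$; equivalently $v_s,v_t\in G_z$, and then by convexity $[v_s,v_t]\subset G_z$. I split on $\dim G_z\in\{0,1,2\}$. If $\dim G_z=0$, then $G_z=\{v_z\}$ forces $v_s=v_t=v_z$, contradicting $v_s\neq v_t$. If $\dim G_z=2$, $v_z$ lies in the interior of a facet of $\nabla$, but by the intersection description recalled in \S\ref{subsec:intersection} this forces $D_z|_V=0$, incompatible with the standing assumption that $D_z$ defines a toric section of $\pi_V$. The remaining case $\dim G_z=1$ yields an edge $G_z$ of $\nabla$ on which the distinct vertices $v_s,v_t$ of $\nabla$ must be the two endpoints; and $v_z\in G_z$ cannot itself be a vertex, so it lies in the relative interior of this edge, as required.

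The main obstacle is the facet case $\dim G_z=2$: the combinatorial condition $F_z\cap M\subset F_s\cap F_t$ does not on its own preclude $v_z$ from sitting in the interior of a $2$-face of $\nabla$, so the global geometric hypothesis that $D_z$ is a nonzero toric section is essential, and is invoked via the intersection formulas of \S\ref{subsec:intersection}.
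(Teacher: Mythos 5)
Your proof is correct and follows essentially the same route as the paper's: both identify the monomials surviving $z=0$ as those indexed by $\{m\in\D : \langle m,v_z\rangle=-1\}$ and reduce the statement to the condition $\langle m,v_s\rangle=\langle m,v_t\rangle=-1$ on that dual face. Your handling of the ``only if'' direction via face duality and the case split on $\dim G_z$ (in particular ruling out the facet case using the standing section hypothesis) is more careful than the paper's one-line assertion of the equivalence with collinearity, but it is a fleshing-out of the same argument rather than a genuinely different one.
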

\begin{proof}
The non-zero monomials in the polynomial $f_{|z=0}$  are of the form
\begin{equation}\{s^{\lg m, v_s\rg +1} t^{\lg m, v_t\rg+1}  \prod_{k}r_k^{\lg m, v_{r_k}\rg+1}\} \cdot
x^{\lg m, v_x\rg+1} y^{\lg m, v_y\rg+1}  \prod_{j}w_j^{\lg m, v_{w_j}\rg+1},
 \end{equation} where $m \in M$ satisfies the equation ${\lg m, v_z\rg}+1=0$.
$v_z$  and the vertices $v_{s}$ and $v_{t}$  are collinear if and only if ${\lg m, v_s\rg +1}= \lg m, v_t\rg+1=0$, that is if and only if $f_{|z=0}$.
\end{proof}

\begin{definition} $D_z$ is a \emph{section at infinity} if and only if
$f_{|z=0}$ is independent of the particular point in $\pu$; explicitly there is no
dependence in $(s,t)$. \\
 $f_{|z=0}$ is a
\emph{toric flex} if  the equation
$z=0$ determines one unique point  of the
general elliptic curve $E$ of the fibration.
\end{definition}

If $\nabla^s$ is a semistable polytope, then the toric flex  corresponding to the point $v_z$ as in Proposition
\ref{inftysection} is the analogue of a section at infinity.
We {can} see explicitly the morphisms and the defining equations of the $K3$ surfaces in  Cox coordinates following  \cite{mavlyutovdegenerations} and \cite{brown2010maps}.

\begin{example}
In example \ref{ex:diamond15} the equation of the general $K3$ hypersurface (in the anticanonical system) is:
$$\phi_0x^3+
\phi_1xyz+
\phi_2z^6+
\phi_3y^2+
\phi_4xz^4+
\phi_5x^2z^2+
\phi_6yz^3=0
$$
with each $\phi_j(s,t)$ a generic polynomial of degree $2$ in $(s,t)$.
It is easy to see that $z=0$ is a section, but $f_{|z=0}$ is not a section at infinity.
 The same holds for the other sections coming from toric
divisors  corresponding to $v_{x}=0$ and $v_{y}=0$.
\end{example}

\begin{remark} Under these hypothesis we denote by $s,t$ the two corresponding coordinates.
\end{remark}

We further investigate  the type of sections of the elliptic fibrations in Section \ref{sec:tntsections}.

\subsection{Candelas-Fonts Weierstrass models}\label{subsec:cf}(\emph{On conditions 1), 2) and 3)})

Candelas and Font \cite{CandelasFont} fix a two dimensional reflexive polytope ($\nabla^{i;\varphi}$), a toric Fano $B$ (in particular $B=\P^1$) and proceed, by what it is called ``un-Higgsing" in the physics literature, to give examples of elliptic fibrations with interesting singular fibers (\emph{``gauge groups"}) of Calabi-Yau varieties in toric Fano, fibered over $B$ where the general elliptic curve of the fibration is the general elliptic curve in the anticanonical system of the fan over the fixed polytope $\nabla^{i;\varphi}$.
We recast their construction in terms of elliptic fibrations with section at infinity and the semistable polytope.

 \begin{definition}\label{WModel} A \emph{ Candelas-Font Weierstrass  model}   $W \to \pu$  is an elliptically fibered  $K3$  with orbifold Gorenstein singularities, not necessarily general in the anticanonical system, in a 
 variety $X^{i,v_z,L;s}$ with general fiber $E \subset X_{i;\phi}$ 
 and a section at infinity in $D'_z$ (the toric divisor corresponding to $v_z$).
\end{definition}
Note that these singularities are canonical; general anticanonical hypersurfaces in the projective resolution of Fano varieties have orbifold Gorestein singularities.

Next we prove a sufficient condition for an elliptically fibered general  $K3$ with general fiber $E \subset X_{\phi}$ 
to have a Candelas-Font Weierstrass model and we express the condition in term of the combinatorics of the polytope as well as the geometry.

\begin{theorem}\label{th:diagr}(\emph{Candelas-Font Weierstrass models ({Conditions 1) and 3)}).}) Let  $\nabla^{i;\varphi}\subset N_{\varphi,\R}$ be a 2-dimensional polytope and $v_z$ a vertex of $\nabla^{i;\varphi}$ such that there exists a semistable polytope $\nabla^{i,v_z,L;s}\subset N_{\R}$.
A general elliptically fibered $K3$  hypersurface $V $ in the anti-canonical of a toric Fano threefold $\X$ with section at infinity in the edge $L$ and general fiber $ E \subset X_{i;\phi}$ is birationally equivalent to a Candelas-Font Weierstrass model $W \subset X^{i,v_z,L;s}$.
\end{theorem}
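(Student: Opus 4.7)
The plan is to exhibit $V$ and a suitable Candelas-Font Weierstrass model $W$ as hypersurfaces sharing a common dense open in the algebraic torus, so that the birational equivalence is essentially tautological.

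\emph{Step 1 (Polytope containment).} By hypothesis $V \subset \X$ admits a section at infinity along the edge $L = [v_s,v_t]$; Proposition \ref{inftysection} then forces $v_z$ to lie in the interior of a $1$-face of $\nabla$ with vertices $v_s$ and $v_t$, hence $L \subset \nabla$. Since the toric fibration hypothesis gives $\nabla^{i;\varphi} \subset \nabla$ (cf. Section \ref{background}), convexity yields
\[
\nabla^{i,v_z,L;s} \;=\; \mathrm{conv}\bigl(\nabla^{i;\varphi}\cup L\bigr) \;\subset\; \nabla.
\]
Passing to duals in $M_\R$ reverses the inclusion: $\D \subset \D^{i,v_z,L;s}$, and in particular $\D\cap M \subset \D^{i,v_z,L;s}\cap M$.

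\emph{Step 2 (Comparing equations on the common torus).} The algebraic torus $T_N$ embeds as a dense open in both $\X$ and $X^{i,v_z,L;s}$. Unwinding Remark \ref{rem:cyeq} and dividing by $\prod z_i$, the equation of $V$ restricts on $T_N$ to the Laurent polynomial $\sum_{m\in\D\cap M} a_m \chi^m$, while a general anticanonical section of $X^{i,v_z,L;s}$ restricts to $\sum_{m\in\D^{i,v_z,L;s}\cap M} b_m \chi^m$. Specialize $W$ by setting $b_m = a_m$ for $m\in\D\cap M$ and $b_m=0$ for $m\in (\D^{i,v_z,L;s}\setminus\D)\cap M$. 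Then $V\cap T_N = W\cap T_N$ as subschemes of $T_N$.

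\emph{Step 3 (Checking Definition \ref{WModel} and concluding).} The specialized $W$ is an anticanonical hypersurface in the Fano toric threefold $X^{i,v_z,L;s}$ (reflexivity from Theorem \ref{th:exun}), so it is $K3$ with at worst orbifold Gorenstein singularities. The fibration $W \to \pu$ arises from the toric projection induced by $\varphi$, and the required section at infinity on $D'_z$ holds by Proposition \ref{inftysection} applied inside $X^{i,v_z,L;s}$, since $v_z$ sits interior to the edge $L$ of $\nabla^{i,v_z,L;s}$ by construction. Thus $W$ is a bona fide Candelas-Font model. As $V\cap T_N$ and $W\cap T_N$ coincide and are dense in the respective hypersurfaces, $V$ and $W$ are birational.

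\emph{Main obstacle.} The conceptual heart is Step 1: recognizing that the section-at-infinity hypothesis on $L$ together with the prescribed fiber $\nabla^{i;\varphi}$ is exactly what embeds the semistable polytope $\nabla^{i,v_z,L;s}$ into $\nabla$; the rest is bookkeeping with Cox coordinates and Newton polytopes. A minor subtlety is to confirm the specialized $W$ remains reduced and irreducible of the expected dimension, but this is forced by the identification $W\cap T_N = V\cap T_N$ with the generic (hence irreducible) $K3$ hypersurface $V$.
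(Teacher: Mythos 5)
Your proof is correct and follows essentially the same route as the paper: both hinge on the containment $\nabla^{i,v_z,L;s}\subset\nabla$ forced by the fibration and section-at-infinity hypotheses, the dual inclusion $\Delta\subset\Delta^{i,v_z,L;s}$, and the realization of $V$ and $W$ by the same collection of anticanonical monomials. The only difference is cosmetic: you exhibit the birational equivalence by identifying $V$ and $W$ on the common torus $T_N$, whereas the paper phrases the same fact via the linear subsystem $\mathcal L\subset|-K_{X^{i,v_z,L;s}}|$ determined by $\Delta$ and the resolution of indeterminacy of the induced map $\P_{\Delta}\dashrightarrow X^{i,v_z,L;s}$, with $V$ the strict transform of $W$.
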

\begin{proof} By hypothesis the polytope $\nabla$  over the fan of $\X$ contains the polytope $\nabla^{i;s}:=\nabla^{i,v_z,L;s}$, hence we have the dual inclusion
$\Delta \subset \Delta^{i;s}$. $\Delta$ defines a linear subsystem $\mathcal L  \ \subset  \ |-K_{X^{i;s}}|$; the resolution of the interminancy locus provides a birational morphism $\P_{\D} \to X^{i;s} $.  $\bar V$, the general hypersurface in  $\mathcal L $, is the strict transform of the general hypersurface $W$ in $|-K_{X^{i;s}}|$ . This induces a birational morphism between the pullback projective resolution, that is $V \to W$.
\end{proof}

\begin{theorem}\label{th:projection}(\emph{Conditions 1), 2) and 3)}) Assume that a Newton (moment) polytope is a reflexive subpolytope $\Delta \subset \Delta^{i;s}$  which contains $\Delta^{i;\varphi}$ the dual of $\nabla^{i;\varphi} \subset \nabla^{i;s}$. Then also the  viceversa of Theorem \ref{th:diagr} holds, that is  the dual of $\Delta$, $\nabla$  is contained in $\nabla^{X^{i;s}}$.

Furthermore the  projective resolution  of the corresponding $K3$  has a section at infinity.

The  condition that $\nabla$ projects onto $\nabla^{i;\varphi}$ together with the existence of a section at infinity characterizes the examples in Candelas' algorithm.
\end{theorem}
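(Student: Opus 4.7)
The proof rests on polar duality plus the combinatorial dictionary already assembled in the preceding subsections. The plan is to organize the argument around the three assertions in the statement, and to reduce each to a single application of duality or of an earlier result.

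First, for the ``viceversa'' of Theorem \ref{th:diagr}: polar duality reverses inclusions of reflexive polytopes, so the hypothesis $\Delta \subset \Delta^{i;s}$ immediately dualizes to the inclusion $\nabla^{i;s} \subset \nabla$ (that is, $\nabla$ contains the semistable polytope). The additional containment $\Delta^{i;\varphi} \subset \Delta$, with $\Delta^{i;\varphi}$ sitting as the $2$-dimensional slice $\{z_3^{*}=0\}\cap\Delta^{i;s}$, dualizes via the standard slice--projection correspondence to the statement that the projection $\pi\colon N_{\R} \to N_{\varphi,\R}$ along the $e_3$-axis maps $\nabla$ into $\nabla^{i;\varphi}$; combining with $\pi(\nabla)\supset\pi(\nabla^{i;s})=\nabla^{i;\varphi}$ one gets the equality $\pi(\nabla)=\nabla^{i;\varphi}$, which is exactly Candelas' conditions 1) and 2).

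Second, for the section at infinity: the edge $L\subset\nabla^{i;s}$ has $v_z$ as its unique interior lattice point, with endpoints $v_s,v_t$ both projecting to $v_z$. Since $L\subset\nabla^{i;s}\subset\nabla$, any projective resolution $\Sigma$ of the fan over $\nabla$ that places $v_s$ and $v_t$ in distinct cones over the fan of $\pu$ satisfies the hypothesis of Proposition \ref{inftysection}; hence the toric divisor $D_z$ restricts to a section at infinity of the elliptic fibration of the general $K3$ in $|-K_{X_\Sigma}|$. Finally, the characterization of Candelas' examples follows by running the dictionary in both directions: the first two steps show that any $\nabla=\Delta^{*}$ with $\Delta$ as in the hypothesis satisfies Candelas' conditions 1), 2), 3); conversely, given $\nabla$ satisfying 1), 2), 3), condition 3) furnishes a vertex $v_z$ and an edge $L$ of lattice length $2$ through it whose endpoints together with the vertices of $\nabla^{i;\varphi}$ generate $N_{\R}$, so Theorem \ref{th:exun} produces a semistable polytope $\nabla^{i,v_z,L;s}\subset\nabla$, and polarity yields $\Delta^{i;\varphi}\subset\Delta\subset\Delta^{i;s}$.

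The main technical subtlety I expect is the lattice bookkeeping behind the slice--projection duality: it exchanges slicing and projection only once the splitting $N=N_\varphi\oplus\Z$ of Candelas' condition 1) is fixed, and one must check that the reflexivity of $\Delta$ is compatible with this splitting rather than merely with a sublattice embedding $N_\varphi\hookrightarrow N$. This should follow from the primitivity of $v_s-v_z$ in $N$, which is built into the lattice length $2$ hypothesis on $L$ in the definition of a semistable polytope and was verified in the proof of Theorem \ref{th:exun}.
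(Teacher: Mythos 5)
Your argument is essentially the paper's own proof, which consists of exactly the two observations you make first: polarity reverses $\Delta\subset\Delta^{i;s}$ to $\nabla^{i;s}\subset\nabla$, and $\Delta^{i;\varphi}\subset\Delta$ dualizes to the projection of $\nabla$ onto $\nabla^{i;\varphi}$, after which the section at infinity comes from Proposition \ref{inftysection} applied to the vertical edge over $v_z$. Your write-up is in fact more detailed than the paper's one-sentence proof (including the converse direction of the characterization and the lattice-splitting caveat), and is correct.
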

\begin{proof} It is enough to observe that $\nabla ^{i;s} \subset \nabla$ and that $\nabla$ projects onto $\nabla^{i;\varphi}$.
\end{proof}

The transformations can be written explicitly in Cox coordinates, as in \cite{mavlyutovdegenerations} and \cite{brown2010maps}:
\begin{example}
Example \ref{3737}: we obtain the projective subdivision of the normal fan to $\D$ obtained adding the rays $v_x,v_y,v_z$.
The monomials defining the generic $K3$ in the corresponding toric variety $X$ are
$$ x^3a^3b^3,\, xyzb^2d^2t^2,\, xyza^2c^2s^2,\, xyzabcdst,\,z^6s^6t^6,\,y^2c^2d^2,\,xz^4abs^4t^4,$$
$$
x^2z^2a^2b^2s^2t^2,\,yz^3cds^3t^3.
$$
Let $L$ be the edge between the vertices $v_a$ and $v_b$ and consider the semistable polytope $\nabla^{15,v_x,L;s}$. The polytopes satisfy the hypothesis of Theorem \ref{th:diagr} and Theorem \ref{th:projection};
the equation of the Candelas Weierstrass model $W$ has monomials
$$
x^3,\,y^2,\, a_{12}z^6, a_8xz^4,\, a_4x^2z^2,\, a_6yz^3,\, a_2xyz,
$$
where $a_i$ is a general polynomial in $s, t$ of degree $i$, for $i=2,4,6,8,12$.

Examples \ref{4318}  and \ref{1943}  satisfy the hypothesis of Theorem \ref{th:diagr} and it is likewise possible to write the transformation to their respective  Candeals-Font Weierstrass models.

It is then easy to write the equations of the discriminant of the elliptic fibrations.
\end{example}

Theorem \ref{th:diagr} provides a criterion for constructing examples $E_8 \times E_8$ F-theory Heterotic duality in the toric context.
In can be verified that all the ``gauge groups", that is all the singular fibers of the elliptic fibrations, are  subgroups of $E_8 \times E_8$.

Braun in \cite{Braun} builds toric Weierstrass models  by contracting  some toric divisors associated to
$\nabla^\varphi$, and thus changing the elliptic fiber; in view of Section \ref{symplecticcut} we instead keep the
basis fixed.  Also \cite{Braun} considers elliptic Calabi-Yau threefolds in certain points of the moduli, while we
consider an appropriate embedding of the elliptic fibration which makes it general as hypersurface in anticanonical system of the toric ambient
 space: this makes the resolution process straightforward as it is induced by the resolution of the toric ambient space.
  It would be interesting to combine the methods.

\section{Toric and non toric sections}\label{sec:tntsections}

We discuss combinatorial conditions for the existence of sections of the elliptic fibration. Let $\nabla\subset N_{\R}$ be a 3-dimensional reflexive polytope containing a 2-dimensional reflexive polytope $\nabla^{\varphi}$ and $v_z$ be a vertex of $\nabla^{\varphi}$. The equation of $N_{\varphi,\R}$ in $N_{\R}$ is $\langle m_{\varphi}, v\rangle =0$, without loss of generality we take $m_{\varphi}=(0,0,1)$. Consider the {maximal} projective subdivision $\Sigma_{\max}$ of the fan over the proper faces of $\nabla$ and let $V_{\max}$ be the smooth $K3$ hypersurface in the corresponding smooth Fano toric variety $X_{\max}$. Let $v_s$ be a lattice point in $\nabla$ at lattice distance one from $v_z$ along an edge of $\nabla$ through $v_z$ not in $N_{\varphi, \R}$. At last, let $D'_z, D'_s$ be the intersection of $D_z,D_s \subset X_{\max}$ with $V_{\max}$. It can be shown that the fiber of the elliptic fibration $V_{\max}\rightarrow \P^1$ is linearly equivalent to $\sum_{v_i\in \nabla_{top}} \langle m_{\varphi},v_i\rangle D'_i$, where $\nabla_{top}=\{v\in \nabla| \langle m_{\varphi},v\rangle >0 \}$  \cite{PeSk97}.

\begin{theorem}
\label{prop:criterio_sections}
Let $v_1,v_2$ be the two lattice neighbors of $v_z$ along the two edges of $\nabla^\varphi$ through $v_z$. Suppose $v_zv_1v_s$ and $v_zv_2v_s$ are elementary triangles  in the maximal triangulation corresponding to $\Sigma_{\max}$, $D'_z,D'_s$ are irreducible and $\nabla$ is simple. If $v_z=v_1+v_2$ then $D'_z$ is a (toric) section of the elliptic fibration; moreover the converse is also true.
\end{theorem}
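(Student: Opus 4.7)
The plan is to reduce the question of whether $D'_z$ is a toric section to computing the intersection number $D'_z\cdot F$, where $F$ is the class of the elliptic fibre, and to evaluate this via Theorem~\ref{th:intK31}. A toric section is an irreducible toric divisor meeting the generic fibre in one point, i.e.\ $D'_z\cdot F=1$; since by hypothesis $D'_z$ is irreducible, the task is to show $D'_z\cdot F = 1$ if and only if $v_z=v_1+v_2$.

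First I would use simplicity of $\nabla$ to identify the lattice neighbours of $v_z$: exactly three edges meet at $v_z$, two of which lie in $\nabla^\varphi$ (with lattice neighbours $v_1,v_2$) and the third leaves $N_{\varphi,\R}$ (with lattice neighbour $v_s$). Since $\lg m_\varphi,v_1\rg=\lg m_\varphi,v_2\rg=0$, only $v_s$ contributes to $F=\sum_{v_i\in\nabla_{top}}\lg m_\varphi,v_i\rg D'_i$, and after normalising $\lg m_\varphi,v_s\rg = 1$ one gets $D'_z\cdot F = D'_z\cdot D'_s$. Because the triangles $v_z v_1 v_s$ and $v_z v_2 v_s$ are elementary and share the edge $v_z v_s$, Theorem~\ref{th:intK31} yields
$$
D'_z\cdot D'_s = \lg m_T, v_2\rg + 1,
$$
where $m_T\in M$ is the vertex of $\D$ dual to the facet of $\nabla$ carrying $T=v_z v_1 v_s$, so $\lg m_T,v_z\rg=\lg m_T,v_1\rg=\lg m_T,v_s\rg=-1$. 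Thus $D'_z$ is a section if and only if $\lg m_T,v_2\rg=0$.

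The forward implication is then immediate: if $v_z=v_1+v_2$ then $\lg m_T, v_2\rg=\lg m_T, v_z\rg - \lg m_T, v_1\rg = 0$. For the converse, elementarity of the tetrahedron $0\,v_z\,v_1\,v_s$ implies that $\{v_z,v_1,v_s\}$ is a $\Z$-basis of $N$, and hence $\{v_z,v_1\}$ is a $\Z$-basis of $N_\varphi$; writing $v_2=av_z+bv_1$ with $a,b\in\Z$, the equation $\lg m_T,v_2\rg = -(a+b) = 0$ forces $v_2=a(v_z-v_1)$. Two further inputs pin down $a=1$: reflexivity of $\nabla^\varphi$ makes the equation of the edge through $v_z$ and $v_2$ of the form $\lg m_{z2},v\rg=-1$ with $m_{z2}\in M_\varphi$, and evaluating at $v_2=a(v_z-v_1)$ gives an integrality constraint that leaves only $a\in\{1,-1\}$; and since the origin is interior to $\nabla^\varphi$, the ray $\{-tv_z:t\ge 0\}$ must lie inside the cone at $v_z$ spanned by the edge directions $v_1-v_z$ and $v_2-v_z$, and a direct non-negative coefficient computation rules out $a=-1$. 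Hence $a=1$, i.e.\ $v_z=v_1+v_2$.

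The main obstacle is this last step of the converse: the combinatorial identity $a+b=0$ does not by itself determine $v_2$, since $a(v_z-v_1)$ yields an integer lattice neighbour of $v_z$ for several values of $a$; one must combine the reflexivity of $\nabla^\varphi$ with the convex-geometric fact that the origin lies in its interior to eliminate the spurious case $a=-1$ and isolate the parallelogram relation $v_z=v_1+v_2$.
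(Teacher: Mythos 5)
Your reduction follows the paper's skeleton exactly: write the fibre class as $\sum_{v_i\in\nabla_{top}}\lg m_\varphi,v_i\rg D'_i$, use simplicity at $v_z$ to see that only $v_s$ can contribute to $D'_z\cdot F$, and apply Theorem~\ref{th:intK31} to the two elementary triangles sharing the edge $v_zv_s$ to reduce everything to the vanishing of $\lg m_T,v_2\rg$. The one step you cannot make as stated is ``after normalising $\lg m_\varphi,v_s\rg=1$'': this pairing is the lattice height $\gamma$ of $v_s$ above $N_{\varphi,\R}$, an invariant of the data rather than a choice, and it can exceed $1$ for a lattice point at lattice distance one from $v_z$ along an edge of $\nabla$ (in Example~\ref{1943} one has $v_s=(1,-1,2)$, so $\gamma=2$). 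Since $D'_z\cdot F=\gamma\,D'_z\cdot D'_s$, your forward implication would break if $\gamma>1$. Fortunately $\gamma=1$ is forced by a hypothesis you are already using: $\det(v_z,v_1,v_s)=\pm1$ by elementarity of $v_zv_1v_s$, and expanding along the last coordinate this determinant is $\gamma$ times the integer $2\times2$ minor of $v_z,v_1$, so $\gamma=1$. Add that one line and the step is sound. (The paper instead carries $\gamma$ explicitly and extracts $\gamma=1$ at the end of each direction.)

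With that repaired, your handling of the equivalence $\lg m_T,v_2\rg=0\Leftrightarrow v_z=v_1+v_2$ is a genuinely different, and in places cleaner, route than the paper's. The paper writes $v_z=\lambda v_1+\mu v_2$ with rational coefficients and evaluates $3\times3$ determinants to show $\lg m_{zs1},v_2\rg=0\Leftrightarrow\lambda=1$ and $\lg m_{zs2},v_1\rg=0\Leftrightarrow\mu=1$, using both triangles and both dual vertices. Your forward direction is the one-line linearity computation $\lg m_T,v_2\rg=\lg m_T,v_z\rg-\lg m_T,v_1\rg=0$. Your converse uses only one dual vertex and replaces the second determinant by structural input on $\nabla^\varphi$ itself: elementarity gives the $\Z$-basis $\{v_z,v_1\}$ of $N_\varphi$, hence $v_2=a(v_z-v_1)$ from $a+b=0$; reflexivity of the edge through $v_z,v_2$ forces $a(1+\lg m_{z2},v_1\rg)=1$ and so $a=\pm1$; and the origin being interior to $\nabla^\varphi$ kills $a=-1$, since $-v_z$ in the cone spanned by $v_1-v_z$ and $v_1-2v_z$ would require nonnegative $s,t$ with $s+t=0$ and $s+2t=1$. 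All three steps check out. The trade-off is that your converse leans on reflexivity and convexity of $\nabla^\varphi$ where the paper uses only the triangulation data, but it is more economical in the intersection theory it invokes.
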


The hypothesis on $D'_s$ is what happens in the cases considered in the previous sections. In fact:
\begin{remark} If $v_s$ is a vertex of the polytope $\nabla$, then $D'_s$ is irreducible. This is the case of the basic semistable models. If $v_s$ is not a vertex of $\nabla$ and
 $v_z$ is on the interior of the same edge, then $D'_z$ is irreducible if and only if $D'_s$ is.
\end{remark}
See also Proposition
\ref{prop:rif_sum_radd}.
\begin{remark}
The theorem has the hypothesis that $\nabla$ is simple, in particular it is sufficient to ask $\nabla$ simple at $v_z$. All the reflexive polytopes (which induce elliptic fibrations) we examined in order to prepare this paper satisfy this condition.
\end{remark}

\begin{proof}
We take $v_1=(a_1,a_2,0), v_2=(b_1,b_2,0), v_z=(z_1,z_2,0), v_s=(\alpha,\beta,\gamma)\in \nabla_{top}$ (i.e. $\gamma>0$). Moreover $v_z=\lambda v_1+\mu v_2$, for $\lambda, \mu\neq 0$.

$D'_z$ is section iff $D'_z\cdot \sum_{v_i\in \nabla_{top}} \langle m_{\varphi},v_i\rangle D'_i =1$. Because $\nabla$ is simple at $v_z$, the only lattice point in $\nabla$ at lattice distance one along an edge of $\nabla_{top}$ through $v_z$ is $v_s$. We can distribute the intersection over the sum and observe that by the discussion in Section ~\ref{subsec:intersection} all the summands but $\langle m_{\varphi}, v_s\rangle D'_z\cdot D'_s = \gamma D'_z\cdot D'_s$ are null. Therefore $D'_z$ is a section iff $\gamma D'_z\cdot D'_s=1$.

Because $v_zv_1v_s$, $v_zv_2v_s$ are elementary triangles, it is straightforward to verify that $\gamma \lambda d =\pm 1$ and $\gamma \mu d = \pm 1$, where $d = a_1b_2-a_2b_1$. Moreover, by Theorem~\ref{th:intK31}:
$$
D'_z\cdot D'_s=\lg m_{zs1},v_2\rg+1=\lg m_{zs2},v_1\rg +1,
$$
where $m_{zs1}, m_{zs2}\in M$ are the dual points of the facets of $\nabla$ span by $v_z,v_1,v_s$ and $v_z,v_2,v_s$ respectively. 
$$
\langle m_{zs1},v_2\rangle \propto
 \left |
\begin{array}{ccc}
b_1 & \alpha-z_1 & a_1-z_1 \\
b_2 & \beta-z_2 & a_2-z_2 \\
0 & \gamma & 0
\end{array}
\right |
=
\left |
\begin{array}{ccc}
b_1 & \alpha-\lambda a_1 - \mu b_1 & (1-\lambda)a_1-\mu b_1 \\
b_2 & \beta-\lambda a_2 - \mu b_2 & (1-\lambda)a_2-\mu b_2 \\
0 & \gamma & 0
\end{array}
\right |
$$
Therefore $\langle m_{zs1},v_2\rangle = 0$ iff
$$
\left|
\begin{array}{cc}
b_1 & (1-\lambda) a_1 \\
b_2 & (1-\lambda) a_2
\end{array}
\right|
=0,
$$
and since $v_1$ and $v_2$ are linearly independent this is the case if and only if $\lambda=1$. A similar argument shows that $\langle m_{zs2},v_1\rangle =0$ iff $\mu=1$.

On one hand, if $v_z=v_1+v_2$ then we have $d:= a_1b_2-a_2b_1=\pm 1$ (as we already observed in the proof of Proposition~\ref{prop:rif_sum_radd}) and $\lambda=\mu=1$. Therefore $\langle m_{zs1},v_2\rangle = \langle m_{zs2},v_1\rangle =0$, $\gamma=1$ and therefore $D'_z$ is a section. On the other hand, if $D'_z$ is a section, then $\lambda=\mu=\gamma=1$ and in particular $v_z=v_1+v_2$.

\end{proof}

 \section{Symplectic cut, degenerations, physics duality}\label{symplecticcut}

  In this section we assume that the polytope $\Delta$ is simple, that is, the normal toric variety $\P_\Delta$ is simplicial \cite{CoxLSh}; this is the case in all the Candelas' examples

\begin{lemma}\label{topbottom}
 If $\Delta \subset M_\R$ is a  polytope associated to a toric Fano threefold $\P_{\D}$ satisfying the condition 1) of Definition \ref{sec:candelasexamples}, then the following are equivalent:
\begin{enumerate}[i.]
\item 2) holds,
 \item  If $D$ is a facet, a codimension $1$ face in $\nabla$, with inner normal vector $\nu_D= (w_1, \dots, w_n)$, then: $w_n >0$ (resp. $w_n <0$) if and only if $D$ lies entirely in the half space
$N_{\leq 0} =\{(z _1, \dots, z_n) \in N \text{ such that } z_n \leq 0\}$ (resp $N_{\geq 0}$).
 \end{enumerate}
\end{lemma}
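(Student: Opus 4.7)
The plan is to pass to the dual picture: under condition 1) the splitting of $N$ induces an embedding $M_\varphi \hookrightarrow M$, $(m_1,m_2)\mapsto(m_1,m_2,0)$, and condition 2) is equivalent to $\Delta^\varphi \subseteq \Delta$ in this sense. Indeed, $\nabla$ is the convex hull of its lattice points, so its image in $N_{\varphi,\R}$ lies in $\nabla^\varphi$ exactly when every defining inequality $\langle m_e,\cdot\rangle \geq -1$ of $\nabla^\varphi$ holds on $\nabla$, that is, when $(m_e,0) \in \Delta$ for every edge-normal $m_e$ of $\nabla^\varphi$. With this reformulation both implications of the lemma become combinatorial statements about facet normals of $\nabla$ and the last coordinate.

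For (i)$\Rightarrow$(ii) I would first handle the forward direction: given a facet $D$ with inner normal $\nu_D = m_D = (w_1,w_2,w_3)$ and any $v \in D$, condition (i) places $(v_1,v_2,0)$ inside $\nabla^\varphi \subseteq \nabla$, so $w_1 v_1+w_2 v_2 \geq -1 = \langle m_D, v\rangle$ forces $w_3 v_3 \leq 0$; this yields $w_3 > 0 \Rightarrow D \subseteq N_{\leq 0}$, and symmetrically $w_3 < 0 \Rightarrow D \subseteq N_{\geq 0}$. For the converses, assume $D \subseteq N_{\leq 0}$ and pick a relative-interior point $v \in D$ with $v_3 < 0$; the vertical segment from $v$ up to $(v_1,v_2,0) \in \nabla$ must leave $D$ before reaching $z_3=0$ (else $D$ would extend above it), so $\langle m_D,(v_1,v_2,0)\rangle > -1$, giving $w_3 > 0$ strictly.

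For (ii)$\Rightarrow$(i) I would verify $(m_e,0) \in \Delta$ for each vertex $m_e$ of $\Delta^\varphi$ by decomposing the vertices of $\Delta$ according to the sign of their last coordinate. Under (ii) these split into top-type ($w_3 < 0$, from facets in $N_{\geq 0}$), bottom-type ($w_3 > 0$, from facets in $N_{\leq 0}$), and vertical-type ($w_3 = 0$); reflexivity guarantees that both top and bottom vertices are present, and using the simplicity of $\Delta$ I would express $(m_e,0)$ as a convex combination of a top and a bottom vertex of $\Delta$ whose horizontal components average to $m_e$. The hardest step will be this pairing: producing, for each edge-normal $m_e$ of $\nabla^\varphi$, a top/bottom pair of vertices of $\Delta$ whose segment passes through $(m_e,0)$; here the simplicity of $\Delta$ and the primitivity of the edge-normals of the reflexive two-dimensional polytope $\nabla^\varphi$ combine to read off the interpolation from the normal-fan combinatorics.
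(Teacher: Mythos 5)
Your route is genuinely different from the paper's: the paper slices $\nabla$ with the plane through the $z_3$-axis parallel to $\nu_D$ and reduces to a two-dimensional convexity statement, whereas you work directly with the duality $\Delta\leftrightarrow\nabla$ and the facet normals. Parts of this are correct and in fact more rigorous than the paper's sketch: the dual reformulation of condition 2) as $(m_e,0)\in\Delta$ for every edge normal $m_e$ of $\nabla^\varphi$ is right, and the forward half of (i)$\Rightarrow$(ii) is clean --- from $(v_1,v_2,0)\in\nabla^\varphi\subseteq\nabla$ and $\langle\nu_D,v\rangle=-1$ you correctly get $w_3v_3\le 0$ for all $v\in D$, hence $w_3>0\Rightarrow D\subseteq N_{\le 0}$ and $w_3<0\Rightarrow D\subseteq N_{\ge 0}$.

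There are two genuine gaps. First, your converse inside (ii) fails exactly at $w_3=0$: for a vertical facet the direction $(0,0,1)$ lies in the affine hull of $D$, so the upward segment from $v$ can stay inside $D$ all the way to $(v_1,v_2,0)\in\partial D$ without $D$ ``extending above'' $z_3=0$; you then get $\langle\nu_D,(v_1,v_2,0)\rangle=-1$, not $>-1$, and no sign for $w_3$. This is not a removable technicality. Take $\nabla^\varphi=\mathrm{conv}\{(\pm1,0),(0,\pm1)\}$ and $\nabla=\mathrm{conv}\{(\pm1,0,0),(0,\pm1,0),(0,0,1),(1,0,-1),(0,1,-1)\}$: one checks that $\nabla$ is reflexive (all eight facet normals are integral and at distance $1$), that conditions 1) and 2) hold, yet the facet $\mathrm{conv}\{(1,0,0),(0,1,0),(1,0,-1),(0,1,-1)\}$ has inner normal $(-1,-1,0)$ and lies entirely in $N_{\le 0}$. (Here $\Delta$ is not simple, but nothing in your argument invokes simplicity.) So the biconditional can only be salvaged as the two forward implications --- which is all that is used later --- and your argument cannot establish it as stated. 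Second, your (ii)$\Rightarrow$(i) is only a plan: the ``pairing'' of a top and a bottom vertex of $\Delta$ whose segment passes through $(m_e,0)$ is precisely the content of the implication, you flag it as the hardest step, and it is left undone. A direct argument avoids it entirely: if some $v\in\nabla$ with, say, $v_3<0$ had $(v_1,v_2,0)\notin\nabla$, the upward vertical segment from $v$ would leave $\nabla$ at a point $p$ with $p_3<0$ through a facet $D$ satisfying $\langle\nu_D,(0,0,1)\rangle=w_3<0$, and then $p\in D\subseteq N_{\ge 0}$ gives a contradiction; note this uses only the forward implications of (ii), which is consistent with the observation above that only those are actually true.
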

\begin{proof}
 \emph{i.} $\Longleftrightarrow $ \emph{ii.} : If $n=2$, then the statement is immediate, as $\nabla$ is convex.
Otherwise, let us consider the plane passing through the $z_n$ axis and parallel to $\nu_D$, and  let $D_2$ be the intersection of $D$ with such a plane. We can then reduce to the case $n=2$.
\end{proof}

 In \cite{Hu06}, S. Hu shows that suitable partitions of \emph{simple} polytopes $\Delta \subset M_\R$ induces a  semistable (or weakly semistable) degeneration of the toric variety associated to the polytope.

\begin{theorem}\label{semistablepartition} Let $\Delta \subset M_\R$ be the polytope associated to a toric Fano threefold $X$ satisfying conditions 1) and 2) of Definition \ref{sec:candelasexamples}. We also assume that $\Delta$ is simple, that is $X$ is simplicial. Then the polytope $\Delta^{\varphi} \subset \Delta$, dual of $\nabla^{\varphi}$, determines a  \emph{symplectic cut}, a simple, semistable partition of $\Delta$ \cite{Hu06}.
\end{theorem}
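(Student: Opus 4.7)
The plan is to realize the symplectic cut concretely as the bisection of $\Delta$ by the lattice hyperplane $H = \{z_3^* = 0\} \subset M_{\R}$, where $z_3^*$ is the coordinate dual to the generator of the $\Z$-summand in the splitting $N = N_\varphi \oplus \Z$ given by condition 1) of Section \ref{sec:candelasexamples}. Set $\Delta^\pm := \Delta \cap \{\pm z_3^* \geq 0\}$, so that $\Delta = \Delta^+ \cup \Delta^-$ and $\Delta^+ \cap \Delta^- = \Delta \cap H$.

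The first step is to identify the slice with $\Delta^\varphi$. A point $m = (m', 0) \in H$ belongs to $\Delta$ iff $\lg m, v \rg \geq -1$ for every vertex $v$ of $\nabla$, which (since $z_3^*(m) = 0$) reduces to $\lg m', \pi(v) \rg \geq -1$ under the projection $\pi: N \to N_\varphi$. Condition 2) guarantees that the image $\pi(\nabla \cap N)$ equals $\nabla^\varphi \cap N_\varphi$, and in particular contains every vertex of $\nabla^\varphi$, so the inequality is equivalent to $m' \in \Delta^\varphi$. Hence $\Delta \cap H = \Delta^\varphi$ and both $\Delta^\pm$ are lattice polytopes with common face $\Delta^\varphi$.

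Next I would verify simplicity of the pieces. The ambient $\Delta$ is simple by hypothesis, and $\Delta^\varphi$ is automatically simple since it is two-dimensional. For a vertex $w$ of $\Delta^+$ with $z_3^*(w) > 0$, the three facets of $\Delta$ through $w$ remain facets of $\Delta^+$. For a vertex $w$ of $\Delta^+$ lying on $H$ --- either an original vertex of $\Delta$ in $H$ or a new vertex arising as the intersection of an edge of $\Delta$ with $H$ --- the facets of $\Delta^+$ through $w$ are the upward facets of $\Delta$ through $w$ together with $\Delta^\varphi$. Here Lemma \ref{topbottom} is essential: it ensures that every facet of $\nabla$ lies in a single closed half-space of $N_\R$, dually preventing an edge or facet of $\Delta$ from lying inside $H$ without belonging to $\Delta^\varphi$. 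A short case analysis at each vertex type then yields exactly three facets meeting.

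Finally, I would invoke Hu's axioms \cite{Hu06} for a simple semistable partition: the pieces are simple lattice polytopes, their intersection is the simple lattice polytope $\Delta^\varphi$, and the decomposition is induced by a single lattice hyperplane that meets $\Delta$ transversally along $\Delta^\varphi$, so the normal-crossings condition is automatic. The main obstacle will be the combinatorial simplicity check at newly created vertices on $H$: one must rule out that an edge or facet of $\Delta$ lies inside $H$ outside $\Delta^\varphi$, and this is precisely what Lemma \ref{topbottom} guarantees via condition 2).
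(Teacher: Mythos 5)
Your proposal follows essentially the same route as the paper: slice $\Delta$ by the lattice hyperplane $\{z_3^*=0\}$, identify the slice with $\Delta^{\varphi}$ using condition 2), invoke Lemma \ref{topbottom} for simplicity of the two pieces, and check Hu's combinatorial conditions --- indeed the paper's own proof is terser than yours, and your verification that $\Delta\cap\{z_3^*=0\}=\Delta^{\varphi}$ supplies a detail the paper only asserts. One small caveat: at a vertex $w$ of $\Delta$ lying on the cutting hyperplane, the real obstruction to simplicity is not an edge or facet of $\Delta$ lying inside the hyperplane away from $\Delta^{\varphi}$, but the configuration in which two of the three edges of $\Delta$ at $w$ point strictly up and one strictly down (the truncated tangent cone is then a cone over a quadrilateral); this is what Lemma \ref{topbottom} actually rules out, via a short duality argument on the facets of $\nabla$ adjacent to the crossing facet dual to $w$.
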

\begin{proof}  $\Delta^{\varphi}$ divides the polytope $\Delta$ in two polytopes $\Delta_1$ and $\Delta_2$. Lemma \ref{topbottom} shows that each $\Delta_j, \ j=1,2$, is simple, that is the partition is simple.  We need to verify that the conditions stated in \cite{Hu06} are satisfied, namely that  any $\ell$-face  of $\Delta_j$, $\ell=1,2$, is contained in exactly $k - \ell + 1$ polytopes $\Delta_j$ if there is a $k$-face of $\Delta$ containing it.
 This follows from a straightforward verification.
\end{proof}

\begin{lemma} Let $\Delta$ be a polytope as in \ref{semistablepartition}. Then the semistable partition determined by $\Delta^{\varphi}$ is also \emph{balanced}, in the sense of \cite{Hu06}.
\end{lemma}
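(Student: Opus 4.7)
The plan is to exhibit the balancing condition directly on the cutting facet. By condition 1) of \ref{sec:candelasexamples}, the lattice splits as $N=N_\varphi\oplus\Z\cdot e_3$, and dually $M=M_\varphi\oplus\Z\cdot e_3^*$. The polytope $\D^\varphi$, which is the dual of $\nabla^\varphi$, therefore lies entirely in the hyperplane $H=\{z_3^*=0\}\subset M_\R$. The cut of $\D$ performed in Theorem~\ref{semistablepartition} is exactly the intersection $\D\cap H$, and it splits $\D$ into $\D_1\subset\{z_3^*\geq 0\}$ and $\D_2\subset\{z_3^*\leq 0\}$.

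First I would verify that in each piece $\D_j$ the ``new'' face $\D^\varphi=\D\cap H$ appears as a genuine facet: this is immediate since $H$ is a hyperplane and $\D_j$ has non-empty interior on its side of $H$ (both halves are non-degenerate by Lemma~\ref{topbottom} applied to condition 2), which guarantees that facets of $\D$ actually split into pieces lying on either side of $H$).

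Next I would compute the primitive inward normals $\nu_1,\nu_2\in N$ of the facet $\D^\varphi$ inside $\D_1$ and $\D_2$ respectively. Since $H$ is defined by $\langle e_3,\cdot\rangle=0$ and $\D_1$ lies on the $z_3^*\geq 0$ side, the inward normal of the new facet of $\D_1$ is $\nu_1=-e_3\in N$; symmetrically $\nu_2=+e_3\in N$. In particular both are primitive lattice vectors and
\[
\nu_1+\nu_2=0.
\]
This is precisely the balancing condition of \cite{Hu06}: the outward normals of the two new facets, one in each piece of the partition, are opposite primitive lattice vectors, so the toric divisors introduced by the cut on the two components of the semistable degeneration are glued with compatible (opposite) normal directions.

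The routine verification I am glossing over is that the other combinatorial axioms used by Hu to define ``balanced'' (in particular, the compatibility between the cutting hyperplane and lower-dimensional faces of $\D$) are inherited from the simple, semistable partition established in Theorem~\ref{semistablepartition}; the main conceptual point, and the only one that actually uses condition 1), is the identification of the two inward normals as $\pm e_3$.
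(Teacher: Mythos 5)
There is a genuine gap: you have verified the wrong condition. The property you establish --- that the inward normals of the shared facet $\Delta^{\varphi}=\Delta\cap\{z_3^*=0\}$ in $\Delta_1$ and $\Delta_2$ are the opposite primitive vectors $\mp e_3$ --- is automatic for \emph{any} hyperplane slice of \emph{any} polytope, so it carries no information and is not what ``balanced'' means in \cite{Hu06}. The balanced condition there is a constraint on the \emph{new vertices} of the partition: every vertex of a piece $\Delta_j$ that is not already a vertex of $\Delta$ must lie on an edge (one-dimensional face) of $\Delta$. That is precisely the ``routine verification'' you declare yourself to be glossing over, and it is the entire content of the lemma; your ``main conceptual point'' (the identification of the normals as $\pm e_3$) plays no role in it.

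The paper's proof is the correct one-line check: the vertices of $\Delta_j$ that are not vertices of $\Delta$ are exactly the vertices of $\Delta^{\varphi}=\Delta\cap\{z_3^*=0\}$ that are not vertices of $\Delta$, and each such point is where the cutting hyperplane meets the $1$-skeleton of $\Delta$ (a point in the relative interior of a $2$-face $F$ of $\Delta$ cannot be a vertex of the slice, since $F\cap\{z_3^*=0\}$ would then be a segment through it), hence lies on an edge of $\Delta$. Note that this condition is not formally implied by the simple semistable partition of Theorem~\ref{semistablepartition}, and in higher dimensions a hyperplane cut can fail it (a vertex of the slice may land in the interior of a $2$-face) while your normal condition would still hold; so the repair is to replace the normal computation by the observation above about where the new vertices sit.
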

\begin{proof} By construction all the vertices of $\Delta_j$ which are not vertices of $\Delta$ lie on an edge of $\Delta$, which makes the subdivision balanced. Note that these vertices are the vertices of $\Delta^{\varphi}$ which are not vertices of $\Delta$.
\end{proof}

\begin{definition} \cite{Hu06} The semistable, balanced subdivision of $\Delta$ determined by $\Delta^{\varphi}$ is  mildly singular if the vertices of $\Delta_j$  which are not vertices of $\Delta$ are non singular in each $\Delta_j $, that is the primitive vectors at such vertex span the lattice $M$ (over $\Z$).
\end{definition}

\begin{theorem}\label{mah}(Th. 3.5 \cite{Hu06}) Let $\{ \D_j \}$, $j=1,2$ be a mildly singular semistable partition of
$\D$: then there exists a weak semistable degeneration of $\P_{\D} $, $f: {\tilde
\P_{\D}} \to \C$ with central fiber ${\P_{\D}}_{,0}=\cup _j {\P_{\D_j}}$. The central
fiber is completely described by the polytope partition $\{ \D_j\}$ and $\P_{\D_1} \cap \P_{\D_2}= \P_{\Delta^{\varphi}}$.
\end{theorem}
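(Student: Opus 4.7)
The plan is to realize the degeneration explicitly as a toric morphism coming from a polytope lifted one dimension higher, following Hu's symplectic cut construction. Since the statement is quoted from \cite{Hu06}, my proposal is to sketch how Hu's argument specializes to the situation at hand.

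First I would lift the partition to a polytope $\tilde\Delta \subset M_\R \times \R$. Let $\ell: M_\R \to \R$ be the affine linear function whose zero set is the affine hyperplane supporting $\Delta^{\varphi}$ inside $M_\R$, normalized so that $\ell$ is nonnegative on $\Delta_2$ and nonpositive on $\Delta_1$ and integral at lattice points. Define the piecewise linear function $\psi: \Delta \to \R_{\geq 0}$ by $\psi = 0$ on $\Delta_1$ and $\psi = \ell$ on $\Delta_2$; it is continuous, convex up, and its domain of nonlinearity is exactly $\Delta^{\varphi}$. Now set
\[
\tilde\Delta \;=\; \bigl\{\,(m,h) \in M_\R\times\R \;:\; m \in \Delta,\; 0 \leq h \leq C - \psi(m)\,\bigr\}
\]
for a sufficiently large integer constant $C$. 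Then $\tilde\Delta$ is an integral polytope whose facets dual to the $h$-direction record the partition: the ``top'' facet $\{h = C - \psi(m)\}$ is the disjoint union of two linearity domains of $\psi$, corresponding to $\Delta_1$ and $\Delta_2$.

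Next I would extract the degeneration from the toric geometry of $\tilde\Delta$. The projection $\tilde\Delta \to [0,C]$ via the $h$-coordinate is a proper toric map $f: \P_{\tilde\Delta} \to \P^1$, and restricting to an affine chart gives $f: \tilde{\P_\Delta} \to \C$. The generic fiber is the slice $\{h = t\}$ for $0 < t \ll C$, which is integrally equivalent to $\Delta$, so it realizes $\P_\Delta$. The special fiber over $h = 0$ is the ``broken'' top slice, namely $\Delta_1 \sqcup_{\Delta^{\varphi}} \Delta_2$, which by the construction of toric varieties from polytope complexes is precisely $\P_{\Delta_1} \cup_{\P_{\Delta^\varphi}} \P_{\Delta_2}$. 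That the two components meet along $\P_{\Delta^{\varphi}}$ is immediate because the common face $\Delta_1 \cap \Delta_2 = \Delta^{\varphi}$ controls the toric subvariety of intersection.

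Then I would check the semistability. Weak semistability reduces to two conditions: the total space $\tilde{\P_\Delta}$ is normal, and the central fiber is reduced with components meeting transversally generically. Normality follows from simpleness of $\Delta$ (hence of $\tilde\Delta$ away from the extra vertices introduced along $\Delta^{\varphi}$). The reducedness and transversality at each codimension-one stratum of the intersection $\P_{\Delta^{\varphi}}$ is controlled by the primitive edge generators of $\tilde\Delta$ at the newly created vertices, which sit above the vertices of $\Delta^{\varphi}$ that are not vertices of $\Delta$. The ``balanced'' condition (these lie on edges of $\Delta$) together with the \emph{mildly singular} hypothesis (the primitive vectors at such a vertex generate $M$ over $\Z$) ensures that locally at the intersection the family looks like $xy = t$ times smooth factors; this is the weakly semistable local model.

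The main obstacle, and the step where most of the real work lies, is the last one: verifying carefully that the combinatorial hypotheses genuinely translate into the scheme-theoretic condition of (weak) semistability in codimension one along $\P_{\Delta^{\varphi}}$, and that the generic transversality pattern persists through the possible orbifold singularities of $\P_\Delta$ that survive the lift. Once this local normal-form check is done, the completeness of the description of the central fiber by $\{\Delta_j\}$ follows from the standard dictionary between polytope faces and torus-invariant strata.
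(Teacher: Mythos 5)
You should first note that the paper itself offers no proof of this statement: it is quoted verbatim as Theorem 3.5 of \cite{Hu06}, so there is no internal argument to compare against. Your sketch does follow the route Hu actually takes (and the classical Mumford/KKMS toric degeneration): encode the partition by an integral convex piecewise linear function $\psi$ with domains of linearity $\Delta_1,\Delta_2$, lift to a polytope (or polyhedron) in $M_\R\times\R$, and read the degeneration off the extra coordinate. So the architecture is right.

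There is, however, a concrete error in how you extract the fibers, and as written that step fails. The generic and special fibers of $f$ are \emph{not} the horizontal slices $\{h=t\}$ of $\tilde\Delta$: for your $\tilde\Delta=\{0\le h\le C-\psi(m)\}$ the slice at $h=0$ is the entire bottom facet, a copy of $\Delta$, not the broken union, and the slice at any fixed small $t$ is again all of $\Delta$ --- horizontal slices never see the break, which lives only in the \emph{graph} facets $\{h=C-\psi(m)\}$. The correct mechanism is on the fan side: the projection $N\times\Z\to\Z$ is compatible with the normal fan of $\tilde\Delta$ and gives the toric morphism to $\P^1$ (restrict to a chart for $\C$); the central fiber is the fiber over the torus-fixed point whose ray is $\R_{\le 0}$, i.e.\ the union of the toric divisors corresponding to the facets of $\tilde\Delta$ whose inner normals have negative last coordinate --- precisely the two graph facets, each integrally affinely isomorphic to $\Delta_j$ because $\psi$ is integral affine on $\Delta_j$. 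Reducedness of the central fiber then comes from primitivity of the normals $(\nabla\ell,\pm1)$, which is where integrality of $\ell$ and the mildly singular hypothesis enter. Your final paragraph honestly defers the local $xy=t$ normal-form verification, which is acceptable for a sketch of a cited result, but the fiber identification needs to be repaired before the rest of the argument can be run. (Also, normality of $\P_{\tilde\Delta}$ is automatic for a toric variety of a polytope; simpleness of $\Delta$ is about the singularities being at worst quotient, not about normality.)
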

The following corollary follows from Lemma \ref{topbottom}:

\begin{theorem}\label{cf} Let $\Delta \subset M_{\R}$ be the polytope associated to a toric Fano threefold $X$ satisfying conditions 1) and 2) of Definition \ref{sec:candelasexamples}. We also assume that $\Delta$ is simple and that $\Delta^{\varphi}\subset \D$, the dual of $\nabla^{\varphi}$, determines a  simple, mildly singular semistable partition of $\Delta$ \cite{Hu06}. Let $f: {\tilde
\P_{\D}} \to \C$ be the induced weak semistable degeneration.  The rays of the toric fan of $\P_{\D_1}$ are the rays of $\P_\D$ with $z_3 \geq 0$
    together with the ray $z_1=z_2=0, \ z_3 \geq 0$;
    similarly for the rays of the toric fan of $\P_{\D_2}$ (with  the ray $z_1=z_2=0, \ z_3 \leq 0$).
\end{theorem}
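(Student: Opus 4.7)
The plan is to read off the facets of each piece $\Delta_j$ of the partition directly, using Lemma~\ref{topbottom} and its dual incarnation on $\Delta$ to control which facets of $\Delta$ survive the cut.

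First I would verify that the cut is induced by the hyperplane $H := \{z_3^* = 0\} \subset M_\R$. The splitting $M = M_\varphi \oplus \Z$ given by Condition~1) of Section~\ref{sec:candelasexamples} makes sense of this hyperplane, and Condition~2), combined with the defining inequalities of $\Delta$, implies that the slice $\Delta \cap H$ coincides with $\Delta^\varphi$ naturally embedded in $M_\R$. Consequently the partition of the theorem is realized by setting $\Delta_1 = \Delta \cap \{z_3^* \geq 0\}$ and $\Delta_2 = \Delta \cap \{z_3^* \leq 0\}$, up to the convention for which half is labeled $\Delta_1$.

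Next I would enumerate the facets of $\Delta_1$ and identify their primitive inner normals. Any facet of $\Delta_1$ is either the new facet $\Delta^\varphi$ itself, whose inner primitive normal in $N$ generates the ray $z_1 = z_2 = 0, \ z_3 \geq 0$; or the trace on the closed half-space $\{z_3^* \geq 0\}$ of a facet $F_v$ of $\Delta$ corresponding to a vertex $v$ of $\nabla$. In the latter case the trace is two-dimensional precisely when $F_v$ meets the open half-space $\{z_3^* > 0\}$, and when it is the primitive inner normal remains $v$.

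The heart of the argument is then to identify which vertices $v$ of $\nabla$ satisfy this condition. Here Lemma~\ref{topbottom}, applied in the dual direction via the reflexive correspondence between facets of $\Delta$ and vertices of $\nabla$, implies that $F_v$ lies entirely on one side of $H$ precisely when $v_3$ has the appropriate sign; the degenerate case $F_v \subset H$ is ruled out because $\Delta^\varphi$ is a proper slice of $\Delta$ rather than a facet. Combining this with the labeling of $\Delta_1$ and $\Delta_2$ yields the claimed list of rays for $\P_{\Delta_1}$, and the statement for $\P_{\Delta_2}$ follows by the symmetric argument applied to the opposite half-space, which flips the sign on the $z_3$-axis and produces the ray $z_1 = z_2 = 0, \ z_3 \leq 0$. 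The simpleness hypothesis on $\Delta$ is used to exclude coincidences in which more than three facets of $\Delta$ might meet at a vertex of $\Delta^\varphi$, which would otherwise obscure the one-to-one matching between inherited facets and surviving vertices of $\nabla$.

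The main obstacle I anticipate is keeping the sign conventions straight between the $M$-side and the $N$-side, since Lemma~\ref{topbottom} is formulated for facets of $\nabla$ but is used here to control facets of $\Delta$; once this translation is handled carefully, the rest is bookkeeping and relies only on the elementary geometry of slicing a convex polytope by a coordinate hyperplane.
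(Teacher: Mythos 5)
Your strategy---slice $\Delta$ along $\{z_3^*=0\}$, enumerate the facets of each closed piece, and read off the rays of the normal fans as the inner normals of those facets---is exactly what the paper intends: the paper gives no argument beyond the remark that the theorem ``follows from Lemma \ref{topbottom},'' and your proposal is the natural fleshing-out of that remark. Two points, however, are not settled by what you wrote.

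First, the ``dual incarnation'' of Lemma \ref{topbottom} that you invoke to decide which facets $F_v$ of $\Delta$ survive the cut is not obtained from the lemma by the reflexive correspondence alone: the lemma characterizes facets of $\nabla$, and its hypothesis is condition 2), i.e.\ that $\nabla$ projects onto $\nabla^{\varphi}$ under $N\to N_{\varphi}$. The statement you actually need---$v_3>0$ if and only if $F_v\subset\{z_3^*\leq 0\}$---follows from the same convexity argument provided $\Delta$ projects onto $\Delta^{\varphi}$ under $M\to M_{\varphi}$. That is a different hypothesis, but it holds automatically: for $m\in\Delta$ and $w\in\nabla^{\varphi}\subset\nabla$ one has $\langle m,w\rangle\geq -1$ and $w$ has vanishing third coordinate, so the image of $m$ lies in $\Delta^{\varphi}$. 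Record this one-line verification; without it, ``applied in the dual direction'' is not justified.

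Second, and more seriously, carrying out the sign bookkeeping you deferred shows that your argument does not yield the list of rays as printed, but the list with one sign reversed. The piece $\Delta\cap\{z_3^*\geq 0\}$ acquires the new facet $\Delta^{\varphi}$ with inner normal $(0,0,1)$, and it inherits precisely those facets $F_v$ of $\Delta$ that meet $\{z_3^*>0\}$, which by the dual lemma are those with $v_3\leq 0$; hence its rays are the vertices $v$ of $\nabla$ with $v_3\leq 0$ together with $(0,0,1)$. The inherited rays and the new axial ray point into \emph{opposite} half-spaces of $N_{\R}$, whereas the statement pairs them with the same sign. Example \ref{ex:diamond15} makes this concrete: there $\Delta=\Delta^{\varphi}\times[-1,1]$, so $\Delta\cap\{z_3^*\geq0\}=\Delta^{\varphi}\times[0,1]$ has five facets with normals $v_x,v_y,v_z,(0,0,-1),(0,0,1)$, while ``rays with $z_3\geq 0$ together with $(0,0,1)$'' lists only four and omits $(0,0,-1)$, whose divisor is the fiber over a fixed point of $\pu$ and is indispensable for $\P_{\D_1}\to\pu$ to be surjective (as is implicitly used in the proposition on rational elliptic surfaces that follows). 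So the final step of your proposal (``yields the claimed list of rays'') does not follow from the computation that precedes it; what the computation proves is the corrected statement in which the axial ray added to the piece carrying the rays $\{v_3\geq 0\}$ is $(0,0,-1)$, and vice versa. Either derive that corrected statement explicitly or fix the labeling convention for $\D_1,\D_2$ at the outset, rather than asserting the match without checking it.
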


The degeneration of Theorem \ref{mah} induces a degeneration of the general hypersurface $\bar V \subset \P_\Delta $, Section 4 \cite{Hu06}; let $\mathcal L_j$ be the (ample) line bundle on $\P_{\D_j}$ associated to $\Delta_j$ and $\bar S_1$ and $\bar S_2$ be the associated hypersurfaces.
If all the vertices of $\nabla^{\varphi}$ are also  vertices of $\nabla$, then $\P_{\D}$, $\P_{\D_1}$ and $\P_{\D_2}$  have a fibration with general fiber $\P_{\Delta^{\varphi}}$ and the degeneration  preserves the fibration. We have proved the following:
 \begin{theorem}\label{toricbat} Theorem \ref{mah} induces a weakly semistable degeneration of the general hypersurface $\bar V$ to $\bar S_1  \cup \bar S_2$; in addition $\bar S_1 \cap \bar S_2$ is the general elliptic curve  in $\Delta^{\varphi}$. The construction of the degeneration shows that there is a naturally induced semistable degeneration of the maximal resolution $V$ to $S_1$ and $S_2$.
 \end{theorem}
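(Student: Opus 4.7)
The plan is to build on Hu's Theorem \ref{mah} in three stages: first, extend the ambient weak semistable degeneration of $\tilde\P_\Delta\to\C$ to the general anticanonical hypersurface; second, compute the double locus combinatorially from $\Delta^\varphi$; third, pull the whole picture back along the maximal toric resolution. For the first stage I would invoke Section 4 of \cite{Hu06}: the relative anticanonical bundle on $\tilde\P_\Delta$ restricts to the ample $\mathcal L_j$ on $\P_{\Delta_j}$ with polytope $\Delta_j$, and the polytope decomposition $\Delta\cap M = (\Delta_1\cap M)\cup_{\Delta^\varphi\cap M}(\Delta_2\cap M)$ writes a generic anticanonical section $f=\sum_m a_m x^m$ as $f=f_1+f_2$ with $f_j$ supported on $\Delta_j\cap M$. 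A generic one-parameter deformation of $f$ realizes $\bar V$ as the generic fibre of a flat family $\bar{\mathcal V}\subset\tilde\P_\Delta$ whose special fibre is $\bar S_1\cup\bar S_2$ with $\bar S_j=\{f_j=0\}\subset\P_{\Delta_j}$; weak semistability of $\bar{\mathcal V}$ is inherited from that of $\tilde\P_\Delta$ by a Bertini-type argument, since each $\bar S_j$ appears with multiplicity one and the only singularities come from the ambient orbifold singularities of $\P_{\Delta_j}$.

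For the second stage, the equality $\Delta_1\cap\Delta_2=\Delta^\varphi$ implies $\mathcal L_1|_{\P_{\Delta^\varphi}} = \mathcal L_2|_{\P_{\Delta^\varphi}} = -K_{\P_{\Delta^\varphi}}$, the anticanonical bundle on the toric surface dual to the reflexive $\nabla^\varphi$, and the restrictions $f_j|_{\P_{\Delta^\varphi}}$ coincide and equal the generic anticanonical section supported on $\Delta^\varphi\cap M$. Thus $\bar S_1\cap\bar S_2$ is the generic element of $|-K_{\P_{\Delta^\varphi}}|$, which by the discussion of Section \ref{background} is a smooth elliptic curve since $\nabla^\varphi$ is a two-dimensional reflexive polytope.

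For the third stage, Condition 2 together with Lemma \ref{topbottom} places every lattice point of $\nabla$ in $N_{\geq 0}\cup N_{\leq 0}$, hence every ray of the maximal projective subdivision $\Sigma_{\max}$ of the fan over $\nabla$ sits in one of the two closed half-spaces. As in Theorem \ref{cf}, $\Sigma_{\max}$ restricts to maximal projective subdivisions $\Sigma_{j,\max}$ of the fans of $\P_{\Delta_j}$, and these assemble into a toric resolution of the total family whose central fibre is the crepant orbifold resolution $X_{1,\max}\cup X_{2,\max}$ glued along the maximal resolution of $\P_{\Delta^\varphi}$. The strict transforms $S_j\subset X_{j,\max}$ of $\bar S_j$ are the smooth generic anticanonical surfaces (rational elliptic surfaces in this case), and $S_1\cap S_2$ is the elliptic curve found above, now sitting inside the smooth resolution of $\P_{\Delta^\varphi}$.

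The main obstacle is upgrading the weakly semistable statement on $\tilde\P_\Delta$ to a genuinely semistable degeneration $V\rightsquigarrow S_1\cup S_2$ on the resolution. This reduces to a toric-local computation at each maximal cone of the resolved family meeting the hyperplane $N_\varphi$: the mildly singular hypothesis from Theorem \ref{cf} ensures that such a cone is smooth in the ambient resolution, so the local model near a point of the double divisor is the standard nodal form $\{uv=t\}$ times a smooth factor in suitable toric coordinates. Cutting out $V$ with a generic anticanonical section preserves this normal-crossings structure by genericity of the coefficients $a_m$, yielding the desired semistable degeneration with reduced simple normal-crossings central fibre and total space smooth along $S_1\cap S_2$.
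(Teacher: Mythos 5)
Your proposal is correct and follows essentially the same route as the paper: invoke Section 4 of \cite{Hu06} to degenerate the general anticanonical hypersurface along the polytope partition $\Delta=\Delta_1\cup_{\Delta^{\varphi}}\Delta_2$, identify $\bar S_1\cap\bar S_2$ as the general anticanonical (elliptic) curve in $\P_{\Delta^{\varphi}}$ via the common facet $\Delta^{\varphi}$, and then pass to the maximal resolution. The paper leaves almost all of this implicit (it simply cites Hu and asserts the conclusion), so your write-up is a faithful, more detailed rendering of the same argument rather than a different one.
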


In addition:

\begin{proposition}
 The surface $\bar S_j \in |{\mathcal L}_{Z_j}|$ is a rational elliptic surface, where ${\mathcal L}_{Z_j}$
  is the line bundle determined by the polytope $\D_{Z_j}$.
\end{proposition}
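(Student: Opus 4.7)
The plan is to compute $K_{\bar S_j}$ by adjunction on the smooth semistable total space, identify the gluing curve as a smooth fiber of an induced elliptic fibration, and then invoke Kodaira's canonical bundle formula to pin down $\chi(\co_{\bar S_j})=1$ and rule out the non-rational alternatives.

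First, I would exhibit an elliptic fibration $\pi_j\colon\bar S_j\to\pu$ by restricting the toric fibration of condition~1). The projection $N\to N/N_\varphi$ still induces a toric morphism $\P_{\D_j}\to\pu$ whose general fiber is the toric Fano surface $\P_{\D^\varphi}$. Condition~2) implies $\D_j\cap\{m_3=0\}=\D^\varphi$, so $\mathcal L_{\D_j}|_{\P_{\D^\varphi}}=-K_{\P_{\D^\varphi}}$ and the general fiber of the induced map $\pi_j$ is a smooth anticanonical elliptic curve. The same identification shows that the double curve $E=\bar S_1\cap\bar S_2\subset\P_{\D^\varphi}$ sits inside a fiber of $\pi_j$, and since it is itself a smooth elliptic curve it equals that fiber.

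Next, I would use adjunction on the flat family $\mathcal V\subset\tilde\P_\D$ of Theorem~\ref{toricbat}, with general fiber $\bar V$ and central fiber $\bar S_1\cup\bar S_2$. Semistability of $f$ makes $\mathcal V$ smooth, and since the generic fiber is a $K3$ surface, $\omega_{\mathcal V/\C}$ is trivial, so $K_{\mathcal V}=0$. As $\bar S_1+\bar S_2$ is the principal divisor cut out by the family parameter, $\bar S_j\sim-\bar S_{3-j}$ on $\mathcal V$, and adjunction yields
\[
K_{\bar S_j}=(K_{\mathcal V}+\bar S_j)\big|_{\bar S_j}=\bar S_j\big|_{\bar S_j}=-\bar S_{3-j}\big|_{\bar S_j}=-E.
\]

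Finally, I would apply Kodaira's canonical bundle formula. From $K_{\bar S_j}\sim -E$ with $E$ a smooth fiber of $\pi_j$, one gets $K_{\bar S_j}^2=0$; any $(-1)$-curve $C$ in a fiber would satisfy $K\cdot C=-1$, incompatible with $K\equiv -F$ and $F\cdot C=0$, so $\pi_j$ is relatively minimal. Kodaira's formula then forces $\chi(\co_{\bar S_j})=1$ and the absence of multiple fibers. Since $-K_{\bar S_j}=E$ is effective and not torsion, $\bar S_j$ is neither a $K3$ (where $K=0$) nor Enriques (where $K$ is torsion); combined with $\chi=1$ and an elliptic fibration over $\pu$, this leaves only the rational elliptic case. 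As a numerical cross-check, Mayer--Vietoris gives $2=\chi(\co_{\bar V})=\chi(\co_{\bar S_1})+\chi(\co_{\bar S_2})-\chi(\co_E)=1+1-0$. The main obstacle I anticipate is the technical verification that $\mathcal V$ is smooth with $K_{\mathcal V}=0$ in the weakly semistable toric setting of Theorem~\ref{mah}---possibly requiring a small resolution---but the geometric picture is standard for Type~II Kulikov degenerations of $K3$ surfaces.
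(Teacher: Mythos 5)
Your argument is essentially correct but runs along a genuinely different route from the paper's. The paper never touches the total space of the degeneration: it works inside each toric component $Z_j=\P_{\D_j}$ and computes $K_{\bar S_j}$ by adjunction there, using the explicit toric formulas $-K_{Z_j}=\sum_{v_k\in\Sigma^{(1)}}D_k$ and ${\mathcal L}_{Z_j}=\sum_{v_k\in\Sigma^{(1)}}D_k-D_{e_3}$, where $D_{e_3}$ is the divisor of the new ray $z_1=z_2=0$ introduced by the cut (Theorem \ref{cf}). This gives $K_{\bar S_j}=-D_{e_3}|_{\bar S_j}$ in one line, and $D_{e_3}|_{\bar S_j}$ is visibly a general fiber of $\pi_j\colon Z_j\to\pu$ restricted to $\bar S_j$, i.e.\ an anticanonical elliptic curve in $\P_{\D^{\varphi}}$. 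Your version instead does adjunction on the threefold $\mathcal V\subset\tilde\P_{\D}$, which buys a more intrinsic, Kulikov-flavoured statement ($K_{\bar S_j}=-E$ with $E$ the double curve) and a clean Mayer--Vietoris cross-check, but at the cost of the two inputs you yourself flag: smoothness of $\mathcal V$ and triviality of $K_{\mathcal V}$.

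That flagged step is a real gap, not a formality. The degeneration of Theorem \ref{mah} is only \emph{weakly} semistable, so $\tilde\P_{\D}$ (hence possibly $\mathcal V$) can be singular, and the assertion that $\omega_{\mathcal V/\C}$ is trivial ``because the generic fiber is a $K3$'' is false for a general semistable degeneration of $K3$ surfaces --- arranging $K_{\mathcal V}\sim 0$ is precisely the content of the Kulikov--Persson--Pinkham theorem and may require further birational modification, which could a priori change the components $\bar S_j$ you are trying to identify. In this toric setting one can rescue your argument by observing that $\mathcal V$ is a relative anticanonical hypersurface in $\tilde\P_{\D}$, so that $K_{\mathcal V}=(K_{\tilde\P_{\D}}+\mathcal V)|_{\mathcal V}$ is trivial by adjunction in the (Gorenstein) ambient family; but at that point you are doing exactly the ambient toric adjunction that the paper performs fiberwise on $Z_j$, where no hypothesis on the total space is needed. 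The remainder of your argument --- relative minimality, Kodaira's formula giving $\chi(\co_{\bar S_j})=1$, and exclusion of the $K3$/Enriques/irrational cases from $p_g=q=0$ and $-K$ effective --- is correct and supplies the classification step that the paper leaves implicit after establishing $-K_{\bar S_j}=$ fiber.
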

\begin{proof}
Note in fact that $-K_{Z_1}=  \sum_{v_k \in \Sigma^{ (1)}} v_k$ and
that ${\mathcal L}_{Z_1} = + \sum_{v_k \in \Sigma^{(1)}} v_k - e_3$.
Hence $K_{S_1}= -{e_3}_{|{S_1}}$.
 Note also that $e_3=-K_{S_1}$ is the divisor of a general fiber of
 $\pi_1: Z_1 \to \pu$. \end{proof}

If a section at infinity exists $S_j \to  \bar S_j$ is not a crepant resolution; the exceptional curve is a section of the elliptic fibration.

\begin{remark}\label{rmk:cutswithinftysection}Condition 3) assures that  all the K3 hypersurfaces in the anticanonical of the toric Fano have the same infinity in section:
this is consistent with the F-theory/Heterotic duality, which  is for families of K3.
\end{remark}


\begin{example}  Examples \ref{3737}, \ref{ex:diamond15}, \ref{89}, \ref{example:diamond1} satisfy the hypothesis of the above Theorems.
Note that the general $K3$ hypersurface in the ``diamond" \ref{example:diamond1} $X \to \pu$ with  fiber $ E \subset \P^2$ does not have a section; but it has a semistable degeneration induced by the symplectic cut. In this case all the vertices of $\nabla^{\varphi}$ are also vertices of $\nabla$.
\end{example}

Note that the examples \ref{example:diamond1} and \ref{ex:diamond15}, which admit a semistable degeneration as hypersurfaces in toric Fano, do not satisfy condition 3) in the Candelas' conditions.
 In fact, the symplectic cut construction is more general, and holds for more examples,
with no condition on the existence of a section.

\begin{example}
Example \ref{4318} does not satisfy condition 2) and it does not have a semistable degeneration in the sense of the above theorems.
\end{example}

\bigskip

\begin{appendix}


\section{Equations of elliptic curves in toric Del Pezzo surfaces}
\label{delpezzo}

\begin{figure}[htbp]
\begin{center}
\input{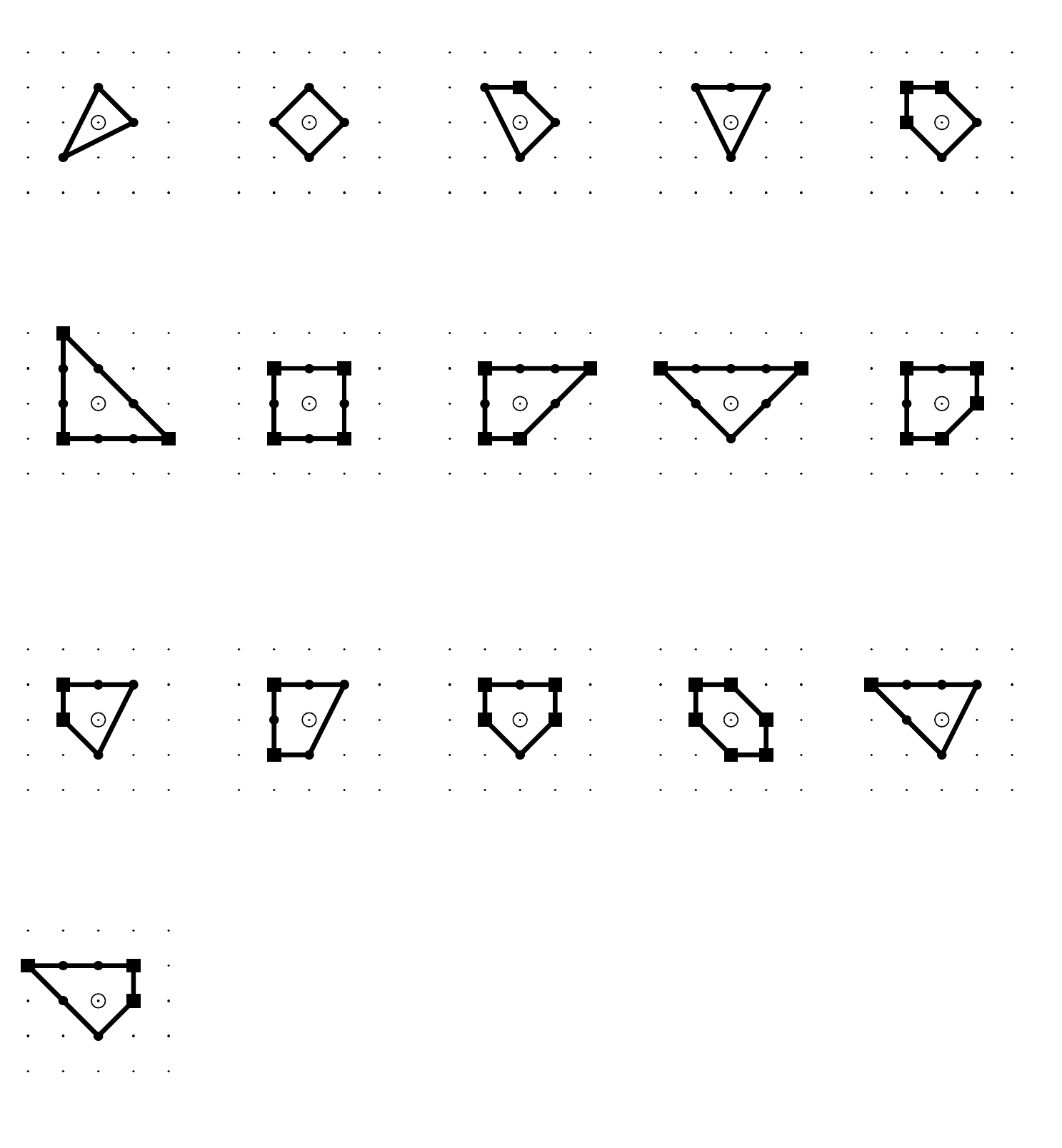_t}
\caption{Reflexive polytopes in the plane}
\label{fig:sections2d}
\end{center}
\end{figure}

Figure~\ref{fig:sections2d} depicts all 2-dimensional reflexive polytopes $\nabla^{i;\varphi}$ up to $\SL(2,\Z)$ transformations. In the figure, an arrow denotes a pair of dual polytopes, if there is no arrow, the polytope is autodual. Let $\Sigma_{i;\varphi}$ be the normal fan of $\D^{i;\varphi}$ and $X_{i;\phi}$ the corresponding (Del Pezzo) toric variety: $X_{i;\phi}=(\C^r-Z(\Sigma_{i;\varphi}))/G_{i;\varphi}$, where $r$ is the number of vertices of $\nabla^{i;\varphi}$. For each $i$ we compute the equation of the generic elliptic curve embedded in $X_{i;\phi}$. We name the vertices as in the figure.

\begin{itemize}

\item $i=1$ (i.e. here we consider $\nabla^{1;\varphi}$ and its dual $\D^{d(1),\varphi}=\D^{6,\varphi}$): $X_{1;\phi}=\P^2_{(x,y,z)}$; the monomials in the equation of the generic elliptic curve are
$$
x^3,y^3,z^3,yz^2,y^2z,xz^2,xyz,xy^2,x^2z,x^2y.
$$

\item $i=6$ (i.e. here we consider $\nabla^{6;\varphi}$ and its dual $\D^{d(6);\varphi} = \D^{1;\varphi}$):  $X_{6;\phi}=\P^{2}_{(x,y,z)}/\Z_3$, where the relations are
$$
(x,y,z)=(\mu,\epsilon_k)\cdot(x,y,z)=\left(\mu x, \epsilon_k \mu y, \frac{1}{\epsilon_k}\mu z\right),
$$
with $\mu\in\C^*$ and $\epsilon_k=e^{2\pi i \frac{k}{3}},\, k=0,1,2$. The monomials in the equation of the generic elliptic curve are
$$
x^3,y^3,z^3,xyz.
$$

\item $i=2:$ $X_{2;\phi}=\P^1\times\P^1$, the monomials in the equation of the generic elliptic curve are
$$
y^2z^2,x^2y^2,z^2w_1^2,x^2w_1^2,yz^2w_1,xzw_1^2,xyzw_1,xy^2z,x^2yw_1.
$$

\item $i=7:$ $X_{7;\phi}=(\P^1_{(x, z)}\times\P^1_{(y, w_1)})/\Z_2$, where the relations are
$$
(x,y,z,w_1)=(\mu,\lambda,\bar{0})\cdot (x,y,z,w_1)=(\mu x, \lambda y, \mu z, \lambda w_1),
$$
and
$$
(x,y,z,w_1)=(\mu,\lambda,\bar{1})\cdot (x,y,z,w_1)=(\mu x, \lambda y, -\mu z, -\lambda w_1),
$$
Monomials:
$$
x^2w_1^2,x^2y^2,z^2w_1^2,xyzw_1.
$$

\item $i=3$: $X_{3;\phi}=\C^4-Z(\Sigma_{3;\varphi})/
G_{3;\varphi}$, where $Z(\Sigma_{3;\varphi})=V(x,z)\cup V(y,w_1)$ and the relations are
$$
(x,y,z,w_1)=(\mu x, \lambda y, \mu z, \mu\lambda w_1),\, \mu,\lambda\in\C^*.
$$
Monomials:
$$
y^2z^3,x^3y^2,zw_1^2,xw_1^2,yz^2w_1,xyzw_1,x^2yw_1,xy^2z^2,x^2y^2z.
$$

\item $i=8$: Monomials:
$$
x^2y^2,x^3w_1,y^3z,z^2w_1^2,xyzw_1.
$$

\item $i=4:$ $X_{4;\phi}=\P^{(1,1,2)}_{(x,y,z)}$. Monomials:
$$
y^4,x^4,z^2,x^3y,x^2y^2,xy^3,x^2z,xyz,y^2z.
$$


\item $i=9:$ $X_{9;\phi}=\P^{(1,1,2)}_{(x,y,z)}/\Z_2$, where the relations are:
$$
(x,y,z)=(\mu,\bar{0})\cdot (x,y,z)= (\mu x, \mu y, \mu^2 z),
$$
and
$$
(x,y,z)=(\mu,\bar{1})\cdot [x,y,z]= (-\mu x, \mu y,- \mu^2 z),\, \mu\in \C^*.
$$
Monomials:
$$
y^4,x^4,z^2,x^2y^2,xyz.
$$

\item $i=5$:
$$
y^2z^3w_1^2,x^2y^2z,zw_1^2w_2^2,xw_1w_2^2,x^2yw_2,yz^2w_1^2w_2,xyzw_1w_2,xy^2z^2w_1.
$$

\item $i=10:$
$$
x^2y^2w_2,xy^2z^2,z^2w_1^2w_2,xw_1^2w_2^2,yz^3w_1,xyzw_1w_2.
$$

\item $i=11:$
$$
y^4z^3,x^3y,zw_1^2,x^2w_1,xy^3z^2,x^2y^2z,y^2z^2w_1,xyzw_1.
$$

\item $i=16:$
$$
yz^4,x^2y^3,z^3w_1,xw_1^2,xy^2z^2,xyzw_1.
$$

\item $i=12:$
$$
x^2y^2,x^2w_1,y^4z^2,z^2w_1^2,y^2z^2w_1,xy^3z,xyzw_1.
$$

\item $i=13:$
$$
yz^3w_1^2,x^2y^3z,z^2w_1^2w_2,xw_1w_2^2,x^2y^2w_2,xy^2z^2w_1,xyzw_1w_2.
$$

\item $i=14:$
$$
xy^2z^2w_1, x^2y^2zw_3, zw_1^2w_2^2w_3,xw_1w_2^2w_3^2,yz^2w_1^2w_2,x^2yw_2w_3^2,xyzw_1w_2w_3.
$$

\item $i=15:$ $X_{15;\phi}=\P^{(2,3,1)}_{(x,y,z)}$. Monomials:
$$
z^6,x^3,y^2,xz^4,x^2z^2,yz^3,xyz.
$$
\end{itemize}

\section{Ellitpic fibration in homogeneous coordinates}

\begin{example}
Consider the reflexive polytope $\nabla\subset N$ is the 3-dimensional reflexive polytope with vertices $v_x=(1,0,0), v_y=(0,1,0), v_s=(-1,-1,1), v_t=(-1,-1,-1)$ (polytope number 1 in the list \cite{KrSkweb}). $\nabla^{\varphi}$ is the 2-dimensional subpolytope of $\nabla$ given by the vertices $v_x, v_y, v_z=(-1,-1,0)$, $\nabla^{\varphi}=\nabla^{1,\varphi}$. The dual $\D\subset M_{\R}$ to is the reflexive polytope with vertices $(2,-1,0)$, $(-1,2,0)$, $(-1,-1,3)$, $(-1,-1,-3)$, see Figures~\ref{fig:1withstick}, \ref{fig:6withstick}.

\begin{figure}[t]
\begin{center}
\input{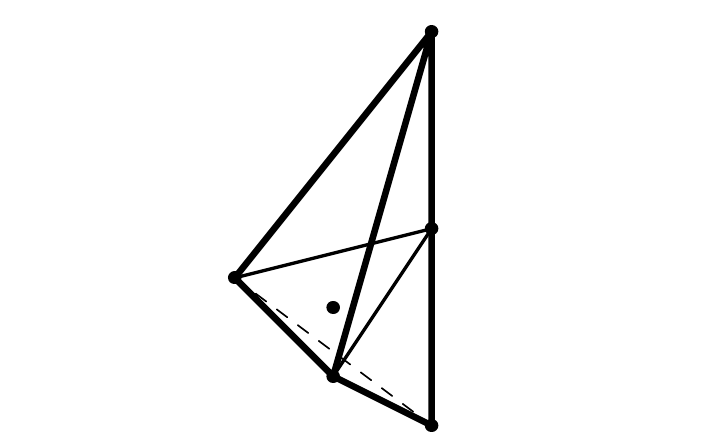_t}
\caption{The polytope $\nabla \subset N_{\R}$}
\label{fig:1withstick}
\end{center}
\end{figure}

\begin{figure}[t]
\begin{center}
\input{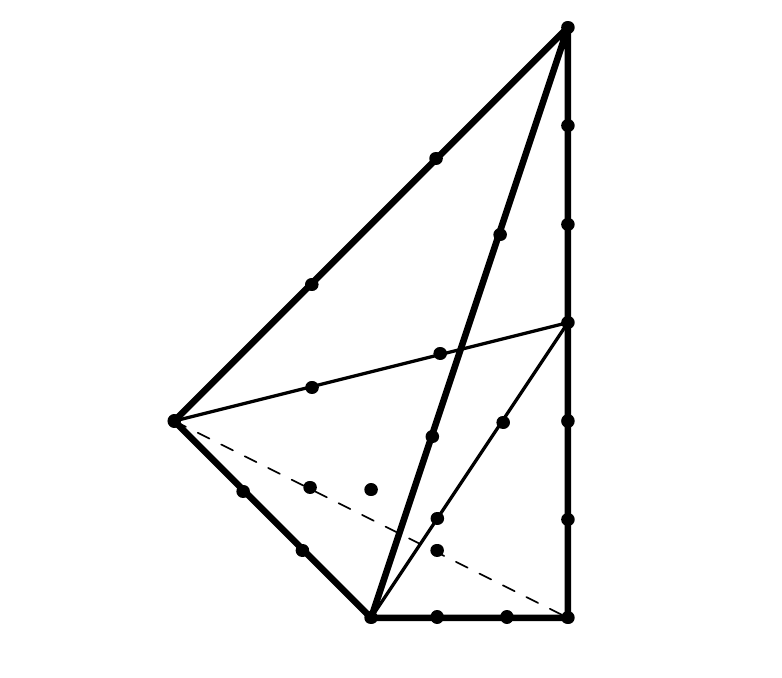_t}
\caption{The polytope $\D\subset M_{\R}$ dual to $\nabla \subset N_{\R}$}
\label{fig:6withstick}
\end{center}
\end{figure}

Consider the fan $\Sigma$ with rays $v_x,v_y,v_z, v_s, v_t$. We have $X_{\phi}=\P^2_{(x,y,z)}$, and $X:=X_{\Sigma}=(\C^3-Z(\Sigma))/(\C^*)^2$, where
\begin{equation}
\label{eq:exfib}
(x,y,z,s,t)\sim(\lambda x,\lambda y,\lambda \mu^2 z, \mu^{-1}s,\mu^{-1}t),\, \lambda\cdot\mu\neq 0.
\end{equation}

Observe that $\nabla^{\varphi}$ lies on the lattice $N_{\varphi}=\{v\in N: \langle v,m_{\varphi}\rangle=0\}$, where $m_{\varphi}=(0,0,1)\in M$. Moreover
$$
\nabla=\nabla^{\varphi}\cup \nabla_{top}\cup \nabla_{bottom},
$$
where $\nabla_{top}=\{v\in \nabla| \langle v,m_{\varphi}\rangle>0\}$ and $\nabla_{top}=\{v\in \nabla |\langle v,m_{\varphi}\rangle<0\}$.

The homogeneous coordinates for the base of the fibration $\P^1$ are given by $(z_{top},z_{bottom})$ with
$$
z_{top}=\prod_{v_i \in \nabla_{top}}z_i^{\lg v_i,m_{\varphi}\rg}
$$
and
$$
z_{bottom}=\prod_{v_i \in \nabla_{bottom}}z_i^{-\lg v_i,m_{\varphi}\rg}.
$$
In this case we have $z_{top}=s$ and $z_{bottom}=t$. It is clear that if we fix a point $(s,t)$ with $s,t\neq 0$, we obtain as a fiber a whole copy of $\P^2$. This can also be seen using the equivalence relation (\ref{eq:exfib}). The generic $K3$ hypersurface $V$ in $X$ has equation
\begin{multline*}
a_1x^3+x^2(a_2z+p_1^{(2)}y)+x(p_1^{(4)}z^2+a_3y^2+p_2^{(2)}yz)\\+a_4y^3+p_3^{(2)}y^2z+p_2^{(4)}yz^2+p_1^{(6)}z^3=0,
\end{multline*}
where the $a_i$ are generic complex numbers, and the $p_i^{(j)}$ are generic homogeneous polynomials of degree $j=2,4,6$ in $s,t$. Fixing a point in the base space amounts to fixing the values of the $p_i^{(j)}$. The generic fiber of the fibration restricted to $V$ is a smooth cubic curve (i.e. an elliptic curve) in $\P^2_{(x,y,z)}$.

\end{example}

\end{appendix}

\bibliographystyle{my-h-elsevier}

\end{document}